\theoremstyle{plain}
\numberwithin{equation}{section}
\newtheorem{theorem}{Theorem}[section]
\newtheorem{lemma}[theorem]{Lemma}
\newtheorem{proposition}[theorem]{Proposition}
\theoremstyle{remark}
\newtheorem{remark}[theorem]{Remark}
\newcommand{\ve}{\varepsilon}
\newcommand{\ud}{\mathrm{d}}
\newcommand{\cN}{\mathcal{N}}
\newcommand{\tv}{\mathrm{d}_{\mathrm{TV}}}
\newcommand{\Wp}{\mathcal{W}_p}
\newcommand{\ii}{\mathsf{i}}
\newcommand{\tf}{\mathfrak{t}}
\begin{document}

\AtBeginShipoutFirst{%
    \begin{tikzpicture}[remember picture, overlay]
        \node[anchor=north west, xshift=1.5cm, yshift=-2.8cm] at (current page.north west) {%
            \includegraphics[width=4cm]{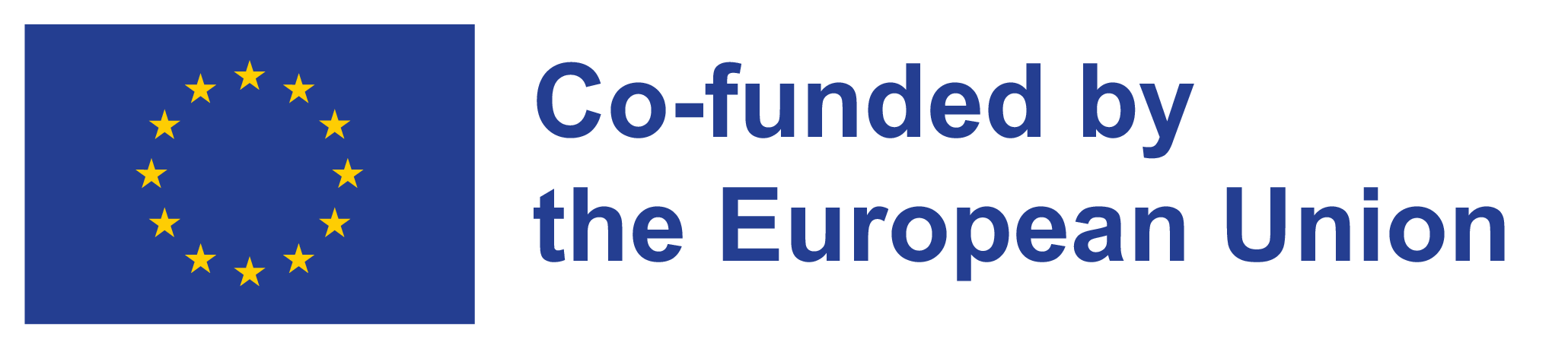} 
        };
    \end{tikzpicture}%
}

\title[Cut-off phenomenon for multivariate general linear processes]{Cut-off phenomenon and asymptotic mixing for multivariate general linear processes}
\author{Gerardo Barrera\orcidlink{0000-0002-8012-2600}}
\address{Center for Mathematical Analysis, Geometry and Dynamical Systems, Mathematics Department, Instituto Superior T\'ecnico, Universidade de Lisboa, 1049-001, Av. Rovisco Pais 1, 1049-001 Lisboa, Portugal.}
\email{gerardo.barrera.vargas@tecnico.ulisboa.pt}

\author{Michael A. H\"ogele\orcidlink{0000-0001-5744-0494}}
\address{Departamento de Matem\'aticas, Universidad de los Andes, Bogot\'a, Colombia.}
\email{ma.hoegele@uniandes.edu.co}

\author{Pauliina Ilmonen\orcidlink{0000-0002-8992-0553}}
\address{Department of Mathematics and Systems Analysis, Aalto University School of Science, Espoo, Finland.}
\email{pauliina.ilmonen@aalto.fi}

\author{Lauri Viitasaari\orcidlink{0000-0002-3015-8664}}
\address{Department of Information and Service Management, Aalto University School of Business, Espoo, Finland.}
\email{lauri.viitasaari@aalto.fi}

\keywords{Asymptotic mixing times; Cut-off phenomenon; Fractional Brownian motion; Gaussian processes; Limiting behavior; Ornstein--Uhlenbeck process; Total variation distance; Wasserstein distance}

\subjclass[2000]{Primary 60G10, 60G15, 37A25; Secondary  60G18, 60H05, 60G22}

\begin{abstract}
The small noise cut-off phenomenon in continuous time and space has been studied in the recent literature for the linear and non-linear stable Langevin dynamics with additive L\'evy drivers - understood as abrupt thermalization of the system along a particular time scale to its dynamical equilibrium - both for the total variation distance and the Wasserstein distance. The main result of this article establishes sufficient conditions for the window and profile cut-off phenomenon, which are flexible enough to cover the renormalized (non-Markovian) Ornstein--Uhlenbeck process driven by fractional Brownian motion and a large class of Gaussian and non-Gaussian, homogeneous and non-homogeneous drivers with (possible) finite second moments. The sufficient conditions are stated both for the total variation distance and the Wasserstein distance. Important examples are the multidimensional fractional Ornstein--Uhlenbeck process, the empirical sampling process of a fractional Ornstein--Uhlenbeck process, an Ornstein--Uhlenbeck processes driven by an Ornstein--Uhlenbeck process and the inhomogeneous Ornstein--Uhlenbeck process arising in simulated annealing.
\end{abstract}

\maketitle

\tableofcontents

\section{Introduction}\label{sec:intro}

The cut-off phenomenon - also known as abrupt convergence - describes an ergodic threshold phenomenon for a parameter dependent stochastic processes, 
and goes back to the studies by D.~Aldous, P.~Diaconis and M.~ Shahshahani  at the beginning of the 1980ies on strong stopping times of random walks on groups and card shuffling Markov models, see~\cite{Al83, Al89, AD87, AD, BD92, DGM90, DS, DI}.
Essentially, it describes the asymptotically ever sharper decay of the distance between the current state of a stochastic process and its limiting dynamical equilibrium along a specific time scale. 
Recently, the cut-off phenomenon has been studied in different applied context including: stochastic simulations of chemical kinetics in~\cite{Bayati--Owahi--Koumoutsakos},
quantum Markov chains in~\cite{Kastoryano12},
coagulation-fragmentation systems (Becker--D\"oring equations) in~\cite{Pego17},
open quadratic fermionic systems in~\cite{Vernier},
deep limits for neural networks in~\cite{Avelin--Karlsson}, 
random quantum circuits/random quantum states in~\cite{Oh--Kais}, and
viscous energy shell model in~\cite{BHPPJSP}.

In the sequel, let $\ve>0$ denote a complexity parameter.
For a given initial datum $x$ we consider a continuous time stochastic process
$X^\ve:=(X^\ve_t(x))_{t\geq 0}$
which possesses a limiting measure $\mu^\ve$ as $t\to \infty$, and a renormalized family of distances or discrepancies $d = (d_\ve)_{\ve>0}$ on the set of probability measures over the respective state space of $X^\ve$. 
Then the family of processes $(X^\ve)_{\ve>0}$ is said to exhibit a profile cut-off phenomenon if there exist a cut-off time scale $t_\ve:=t_{\ve}(x)$ and time window $w_{\ve}:=w_{\ve}(x)>0$ satisfying $t_{\ve}\to \infty$ as $\ve\to 0^+$, $w_{\ve}=\textrm{o}(t_{\ve})$ as $\ve\to 0^+$, and
\begin{equation}\label{e:cutoff}
\lim_{\ve \to 0^+} d_{\ve}\left(X^{\ve}_{t_{\ve} +r\cdot w_{\ve}}(x), \mu^{\ve}\right) 
= \mathcal{P}_x(r)\quad \textrm{ for any }\quad r\in \mathbb{R},
\end{equation}
where $\mathcal{P}_x:\mathbb{R}\to (0,\infty)$ is a continuous 
function - the so-called 
cut-off profile - such that 
\begin{equation}
0 = \lim_{\rho \to \infty} \mathcal{P}_x(\rho )\leq \mathcal{P}_x(r) \leq \lim_{\rho \to -\infty}  \mathcal{P}_x(\rho ) = \mbox{diam}(d)\quad \textrm{ for all }\quad r\in \mathbb{R},
\end{equation}
where $\mbox{diam}(d):= \limsup_{\ve \to 0^+} \mbox{diam}(d_\ve)\in (0,\infty]$, with $\mbox{diam}(d_\ve)$ denoting the diameter with respect to $d_\ve$.

The cut-off phenomenon understood as~\eqref{e:cutoff} or possibly weaker versions of it (for instance, where the limit in~\eqref{e:cutoff} is replaced by appropriate upper and lower limits, in case it does not exist) is a very active field of research in the mathematics literature 
including:
birth and death chains in~\cite{BY1}, sampling processes in~\cite{BLY06,BAR--YCA,Diatta--Ngom,Diedhiou--Ngom}, 
reversible Markov chains in~\cite{BASU}, families of ergodic Markov processes on  manifolds in~\cite{Arnaudon--Coulibaly--Miclo,CSC08,Meliot14}, numerical analysis of sampling Markov chains in~\cite{Jonsson97},
asymmetric simple exclusion processes in~\cite{LL19},
simple exclusion process on the circle in~\cite{La16},
Ising model on the lattice in~\cite{LubetzkySly13},
non-negatively curved Markov chains in~\cite{Salez}, 
Markov processes under information divergences in~\cite{Wang--Choi},
among others.
For an introduction to the field we refer to~\cite{DI,LPW}. In discrete state space, the distance $d_\ve$ is often given as the non-normalized total variation distance. 
In continuous state spaces and continuous time, the cut-off phenomenon was first studied for the small noise limit of 
linear and non-linear multivariate overdamped Langevin equation driven by small Brownian motion~\cite{BAR2018,BJ, BJ1}. 
For more general L\'evy drivers (including degenerate Brownian motion) the cut-off phenomenon has been established for finite and infinite dimensional stable systems, see~\cite{BAR2021,BHPTV,BARESQ, BPJC,BALIU,Insuk}. While the total variation distance requires a certain amount of smoothing properties of the underlying noise in combination with the linearized structure close to the stable state (such as the Kalman rank condition), 
the use of the renormalized Wasserstein distance, which does not depend on this regularity, allows to include degenerate physical systems such as Jacobi chains of oscillators with isolated heat bath
and even infinite dimensional systems~\cite{BH23,BHPGBM,BHPWA,BHPWANO,BHPSPDE, BHPPJSP}.

All these systems have in common that due to the essentially Markovian structure combined with some compactness/tightness properties of the process, they exhibit a convergence to a dynamical equilibrium. To our knowledge the cut-off phenomenon of a fractional Brownian motion driver and hence continuous-time non-Markovian systems has not been established to date.
By means of specific examples and using coupling techniques it is argued in~\cite{Konstantopoulos--Baccelli} that the cut-off phenomenon is also valid even for systems without any Markovian structure. 
The novelty of this article is to establish the cut-off phenomenon for a large class of families of non-Markovian processes with a stable linear component. 

One of the simplest such systems is a small noise Ornstein--Uhlenbeck process driven by fractional Brownian motion, introduced and studied in~\cite{Cheridito}. Fractional Brownian motion, introduced by B. Mandelbrot and J.~W. van Ness in~\cite{MvN} for a given Hurst-parameter $H\neq \nicefrac{1}{2}$ shares many properties with the Brownian motion such as Gaussianity and stationarity of the increments. However, it is not a semi-martingale nor Markovian. For details on fractional Brownian motion and its applications, see the monograph~\cite{Mishura} and the references therein. On top of the case of the fractional Ornstein--Uhlenbeck process, we consider generalized Ornstein--Uhlenbeck processes driven by non-Markovian and non-Gaussian drivers. These cover essentially all stationary processes in continuous time, 
see~\cite{Viitasaari} for the one-dimensional case and~\cite{Voutilainen et alt} for the extension to multivariate setting. For reader's convenience, in Section~\ref{s:1dfOU} we explain in detail 
how to establish cut-off in the sense of~\eqref{e:cutoff} for the one-dimensional fractional Ornstein--Uhlenbeck process in the total variation distance and in the renormalized Wasserstein distance. For $H = \nicefrac{1}{2}$ one then recovers  the original Ornstein--Uhlenbeck process, leading to the cut-off results 
in~\cite{BAR2018, BHPWA, BPJC} in various settings. 
More generally, our main results given in Section~\ref{sec:mainresults} state the following. 
For a general class of multivariate linear processes $(X^\ve_t(x))_{t\ge 0}$ such that for each $t>0$ the marginal at time $t$ satisfies
\begin{equation}
\label{eq:X-intro}
X^\ve_t(x)= e^{- \Lambda t}x + \ve S_t\quad  \textrm{in law},
\end{equation} 
where $\ve>0$, $-\Lambda\in \mathbb{R}^{m\times m}$ is Routh--Hurwitz stable and $x\in \mathbb{R}^m$, $x\neq 0_m$.  
In addition, for $d_0$ being the total variation distance or the Wasserstein distance of order $p\geq 1$
we assume that the following limit is valid:
\[
\lim_{t \to \infty} d_0\left(\frac{S_t}{\sigma_t}, Z\right) = 0
\]
for some sub-exponential (possibly constant) deterministic scale $(\sigma_t)_{t\geq 0}$ and a random variable $Z$ taking values on $\mathbb{R}^m$. We aim to show the cut-off phenomenon in the sense of the existence of some cut-off time scale $t_\ve \to \infty$ as $\ve\to 0$, time window $w_\ve=\mathrm{o}(t_\ve)$ as $\ve \to 0$, satisfying
\[
\lim_{\ve \to 0} d_\ve\left(\frac{X_{t_\ve+r\cdot w_\ve}(x)}{\sigma_{t_\ve+r\cdot w_\ve}}, Z\right) = \mathcal{P}_x(r) \quad \textrm{ for all }\quad r\in \mathbb{R}
\]
for 
$d_\ve$ being the total variation distance or the $(\nicefrac{1}{\ve})-$renormalized  Wasserstein distance of order $p\geq 1$.
In Section~\ref{sec:examples}, we showcase the applicability of our results by giving several new examples that have not yet been considered in the literature. In addition, if $S_t = \int_0^t e^{-\Lambda (t-s)} \ud D_s$  and $(D_t)_{t\geq 0}$ is a Brownian motion or a discontinuous L\'evy process, we recover some important known cases. Our new examples show that the empirical sample mean process of a sequence of fractional Ornstein--Uhlenbeck processes exhibits a cut-off phenomenon as the sample size increases. We also show that Ornstein--Uhlenbeck processes with general stationary increment drivers in $L^2$ and the Ornstein--Uhlenbeck process driven by another centered Gaussian Ornstein--Uhlenbeck process can be treated by applying our results. Moreover, we apply the results to the case of inhomogeneous stochastic convolution $S$ that arise naturally in simulated annealing context. Finally, we treat the case of the non-centered Ornstein--Uhlenbeck drivers by considering a particular nontrivial rescaling of the process $S$. These examples illustrate the wide generality of our results.

While it comes not as a surprise that different distances measure different details of discrepancy, we stress two distinctive features of the total variation distance and the Wasserstein distance: 
The total variation distance enjoys the highly non-linear, zero-homogeneity structure that $\tv(c X, c Y) = \tv(X, Y)$ for any $c\neq 0$ (Lemma~\ref{lem:ptv}(ii)). 
Even the linear shifts of a random vector by a sequence of random vectors with discrete laws which converge in distribution are in general discontinuous under the total variation
distance~\cite[Lemma~1.17]{BHPTV}. Therefore, its topology is finer than the one by convergence in distribution. 
In our main results it therefore captures certain features of the driving noise in the cut-off profile such as the variance, see 
formula~\eqref{e:totalvariationprofile}. As expected, due to the zero-homogeneity no renormalization of the process $S$ is needed. On the other hand, the Wasserstein distance, is more aligned with the linear structure via the so-called shift-additivity property, $\Wp(v+X,X)=\|v\|$, $v\in \mathbb{R}^m$ deterministic vector and $p\geq 1$,
see Lemma~\ref{lem:basicwp}(iii). Here the particular features of the noise essentially cancel out and the profile is given by the norm of the matrix exponential of our linear process, which implies that the cut-off profile is universal in the sense that it does not retain any properties of the driving noise but only of the deterministic linear part. Moreover, the distance has to be renormalized by the noise intensity $\ve$ in order to exhibit a cut-off phenomenon. 

The article is organized as follows. 
In Section~\ref{s:1dfOU}, we explain the cut-off phenomenon for the scalar fractional Ornstein--Uhlenbeck process in total variation and in the Wasserstein distance.
In Section~\ref{sec:mainresults} this is generalized to sufficient ergodic convergence conditions in total variation and in the Wasserstein distance on the underlying multivariate processes of type \eqref{eq:X-intro}. In particular, this covers the case of multivariate convolution processes for which the stochastic convolution can be defined under mild conditions. For those systems we formulate sufficient conditions for window cut-off and characterize profile cut-off, both in the total variation and in the  Wasserstein distance.
In Section~\ref{sec:examples} we apply these results to the examples laid out before. The appendix is divided into three parts. 
Appendix~\ref{ap:proofs} contains the proofs of the main results. In Appendix~\ref{ap:tv} and Appendix~\ref{ap:wp} we gather the most relevant properties of the total variation and the Wasserstein distance. 

\section{Univariate fractional 
Ornstein--Uhlenbeck process}\label{s:1dfOU}

In this section we explain our main results for reader's convenience in the  paradigmatic example of the univariate fractional Ornstein--Uhlenbeck process.
For a given $\ve>0$, let $(X^{\ve,H}_t(x))_{t\geq 0}$ be the solution of the
Ornstein--Uhlenbeck process driven by fractional Brownian motion (for short OUfBM)
 given by
\begin{equation}
\label{modelo}
\ud X^{\ve,H}_t = -\lambda X^{\ve,H}_t \ud t+\ve \ud B^{H}_t
\quad\textrm{ for } \quad t\geq  0,\quad\textrm{ and } \quad
X^{\ve,H}_0 = x,
\end{equation} 
where $x\in \mathbb{R}$, $x\neq 0$, is a deterministic initial datum, 
$\lambda>0$,  and the driven noise 
$(B^{H}_t)_{t\geq 0}$ is a fractional Brownian motion on $\mathbb{R}$ with Hurst exponent
$H\in (0,1)$.
We recall that $(B^H_t)_{t\geq 0}$ is a continuous-time zero-mean Gaussian process starting at zero almost surely, having stationary increments, and whose covariance function is given by
\begin{equation}
\mathbb{E}\left[B^H_sB^H_t\right]=\frac{1}{2}\left(t^{2H}+s^{2H}-|t-s|^{2H}\right)\quad \textrm{ for all }\quad s,\,t\geq 0.
\end{equation}
For $H=1/2$  the classical Brownian motion is recovered and then its increments are independent. However, for $H\neq 1/2$, the increments of $(B^H_t)_{t\geq 0}$ are not independent.
By the variation of constants formula we have the path-wise solution of~\eqref{modelo} 
\[
X^{\ve,H}_t(x)=e^{-\lambda t}x+\ve e^{-\lambda t}\int_{0}^{t}e^{\lambda s} \ud B^{H}_s \quad \textrm{ for all }\quad  t\geq 0.
\]
We define the stationary process $U^H:=(U^H_t)_{t\geq 0}$  by
\[
U^H_t:= e^{-\lambda t}\int_{-\infty}^t e^{-\lambda(t-s)}\ud B^{H}_s \quad \textrm{ for }\quad  t\geq 0.
\]
Note that $U^H_t\stackrel{\mathrm{Law}}{=}U^H_0$, where 
$\stackrel{\mathrm{Law}}{=}$ denotes equality in distribution, and that  $U^H_0$ has mean zero.
Then we denote the covariance of $U^H$ by $\mathcal{R}_{U^H}(t)$, that is,
$\mathcal{R}_{U^H}(t)=\mathbb{E}[U^H_0U^H_t]$, $t\geq 0$.
We then have the following representation of the solution
\begin{equation}
\begin{split}
X^{\ve,H}_t(x)\stackrel{\mathrm{Law}}{=}& e^{-\lambda t}(x-\ve U^H_0)+\ve U^H_t= 
e^{-\lambda t}x +\ve\left( U^H_t-e^{-\lambda t}U^H_0\right)\quad
\textrm{ for }\quad t\geq 0.
\end{split}
\end{equation}
For $m\in \mathbb{R}$ and $\sigma^2>0$ we denote the Normal distribution with mean $m$ and variance $\sigma^2$ by
$\cN(m,\sigma^2)$.

\begin{lemma}[Ergodicity]\label{lem:ergasymp}
\hfill

\noindent
Let $H\in (0,1)$ be fixed.
For all $t>0$, the random variable $X^{\ve,H}_t(x)$ has Normal distribution with mean 
\begin{equation}
m_t(x):=e^{-\lambda t}x
\end{equation}
and variance
\begin{equation}
\sigma^2(t;\ve,H)=\ve^2\left(\mathcal{R}_{U^H}(0) + e^{-2\lambda t}\mathcal{R}_{U^H}(0)-2e^{-\lambda t}\mathcal{R}_{U^H}(t)\right),
\end{equation}
where 
\begin{equation}\label{e:Gammavariance}
\mathcal{R}_{U^H}(0)=\mathbb{E}[U^2_0]=\frac{\lambda}{2}\int_0^\infty e^{-  \lambda s}s^{2H}\ud s = \frac{\lambda^{-2H}\Gamma(2H+1)}{2},
\end{equation}
and $\Gamma$ denotes the usual Gamma function.
Moreover, as $t\to \infty$ the law of the random variable $X^{\ve,H}_t(x)$ converges (in the total variation distance and in the Wasserstein distance of any order $p\geq 1$) to a random variable 
$X^{\ve,H}_\infty$ that has Normal distribution with zero mean and variance $\mathcal{R}_{U^H}(0)$.

In particular, for $H=1/2$ it follows that
$\mathcal{R}_{U^H}(0)=1/(2\lambda)$ and 
$\mathcal{R}_{U^H}(t)=\mathcal{R}_{U^H}(0)e^{-\lambda t}$, $t\geq 0$, while for $H\neq 1/2$ the autocorrelation function
$\mathcal{R}_{U^H}(t)$ is given as special function, 
see~Theorem~2.3~in~\cite{Cheridito}, which has behavior
$\mathcal{R}_{U^H}(t)\sim t^{2H-2}$ as $t$ tends to infinity.
\end{lemma}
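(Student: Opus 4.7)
The plan is to establish the three assertions in sequence: (i) the Gaussian marginal with the stated mean and variance, (ii) the closed-form value of $\mathcal{R}_{U^H}(0)$, and (iii) convergence as $t\to\infty$ in both $\tv$ and $\Wp$, together with the asymptotic behavior of $\mathcal{R}_{U^H}$.

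First, I would use the identity-in-law $X^{\ve,H}_t(x) = e^{-\lambda t}x + \ve(U^H_t - e^{-\lambda t}U^H_0)$ already derived in the text. Since $U^H$ is obtained as a Wiener integral with respect to the fractional Brownian motion, the pair $(U^H_0, U^H_t)$ is centered and jointly Gaussian, so $X^{\ve,H}_t(x)$ is Gaussian with the claimed mean $m_t(x) = e^{-\lambda t}x$. Expanding the variance bilinearly and invoking stationarity of $U^H$ gives
\begin{equation}
\operatorname{Var}(U^H_t - e^{-\lambda t}U^H_0) = \mathcal{R}_{U^H}(0)\bigl(1+ e^{-2\lambda t}\bigr) - 2 e^{-\lambda t}\mathcal{R}_{U^H}(t),
\end{equation}
which matches $\sigma^2(t;\ve,H)/\ve^2$.

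Second, to compute $\mathcal{R}_{U^H}(0) = \mathbb{E}[(U^H_0)^2]$ I would represent $U^H_0$ as a Wiener integral of the deterministic kernel $s \mapsto e^{\lambda s}\mathbf{1}_{(-\infty,0]}(s)$ against $B^H$. For Gaussian kernel integrals of this type, one computes the isometry via the covariance of $B^H$, performs a change of variables, and applies integration by parts (using that $e^{\lambda s}$ decays at $-\infty$ and the increment variance of $B^H$ is $|t-s|^{2H}$) to arrive at the stationary representation $\mathcal{R}_{U^H}(0) = \frac{\lambda}{2}\int_0^\infty e^{-\lambda s} s^{2H}\ud s$. Applying the substitution $u = \lambda s$ and the definition of the Gamma function yields $\int_0^\infty e^{-\lambda s}s^{2H}\ud s = \lambda^{-(2H+1)}\Gamma(2H+1)$, giving the final closed form.

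Third, for the convergence as $t\to\infty$, I would appeal to the known asymptotics $\mathcal{R}_{U^H}(t) \sim c_H t^{2H-2}$ (cited directly from Cheridito et al., Theorem~2.3), and for $H = \nicefrac{1}{2}$ to the elementary identity $\mathcal{R}_{U^H}(t) = \frac{1}{2\lambda}e^{-\lambda t}$. In either case, $e^{-\lambda t}\mathcal{R}_{U^H}(t)\to 0$, so $m_t(x)\to 0$ and $\sigma^2(t;\ve,H)\to \ve^2 \mathcal{R}_{U^H}(0)$. Convergence in $\tv$ and in $\Wp$ between univariate Gaussians with parameters converging to a limit with strictly positive variance follows from the explicit closed-form distance formulas: for $\Wp$, the quantile representation reduces to controlling $|m_t(x)| + |\sigma(t;\ve,H) - \ve\sqrt{\mathcal{R}_{U^H}(0)}|$ against the $p$-th moment of a standard Gaussian; for $\tv$, one uses that $\tv(\cN(m_1,\sigma_1^2), \cN(m_2,\sigma_2^2))$ depends continuously on $(m_i, \sigma_i^2)$ as long as $\sigma_2^2 > 0$ (via the Hellinger or Pinsker bound).

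The only delicate step is the second one: the explicit evaluation of $\mathcal{R}_{U^H}(0)$ requires a careful Wiener-integral computation against fractional noise (rather than a direct It\^o isometry, which is unavailable for $H\neq \nicefrac{1}{2}$), while the tail estimate $\mathcal{R}_{U^H}(t) \sim t^{2H-2}$ is not proved by direct computation and must be cited from~\cite{Cheridito}. Everything else reduces to Gaussian bookkeeping and the standard explicit formulas for $\tv$ and $\Wp$ between one-dimensional Gaussians.
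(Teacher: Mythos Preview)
Your proposal is correct and follows the same route as the paper, which in fact states Lemma~\ref{lem:ergasymp} without a separate proof: the representation $X^{\ve,H}_t(x)\stackrel{\mathrm{Law}}{=}e^{-\lambda t}x+\ve(U^H_t-e^{-\lambda t}U^H_0)$ is established in the text immediately preceding the lemma, the variance formula is then bilinear bookkeeping, and the value of $\mathcal{R}_{U^H}(0)$ together with the asymptotics $\mathcal{R}_{U^H}(t)\sim t^{2H-2}$ are quoted from Cheridito et al.~\cite{Cheridito}. Your write-up simply makes explicit the Gaussian computations and the continuity of $\tv$ and $\Wp$ in the Gaussian parameters that the paper leaves implicit.
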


\noindent In what follows, we study the sharp behavior of the discrepancy
\begin{equation}
\ud^{\ve,H}_{\mathrm{TV}}(t;x):=\tv\left(X^{\ve,H}_t(x),
X^{\ve,H}_\infty\right),\quad t\geq 0, x\in \mathbb{R}, 
\end{equation}
where $\tv(\cdot,\cdot)$ denotes the total variation distance. We refer to Appendix~\ref{ap:tv} for the definition and basic properties of the total variation distance.
The main result of this section is the following.
\begin{theorem}[Cut-off convergence in the total variation distance]\label{th:tv}
\hfill

\noindent
Let $H\in (0,1)$ be fixed and $x\in \mathbb{R}$, $x\neq 0$, be fixed.
For any $\ve\in (0,1)$ 
define 
\begin{equation}\label{e:cutofftimescale}
t_{\ve}:=\frac{1}{\lambda}\ln\left(\frac{1}{\ve}\right).
\end{equation}
Then for any $(w_{\ve})_{\ve\in(0,1)}$ such that $w_{\ve}\to w$ as $\ve\to 0^+$ with $w>0$ it follows that
\begin{equation}\label{eq:resulttv}
\lim\limits_{\ve\to 0^+}
\ud^{\ve,H}_{\mathrm{TV}}(t_{\ve}+r\cdot w_{\ve};x)=
\mathcal{P}^{H}_{\mathrm{TV}}(r),\quad r\in \mathbb{R},
\end{equation}
where
\begin{equation}
\mathcal{P}^{H}_{\mathrm{TV}}(r):=\tv\left(
\cN\left(e^{-\lambda r w}x,\mathcal{R}_{U^H}(0)
\right),
\,
\cN\left(0,\mathcal{R}_{U^H}(0)\right)\right),\quad r\in \mathbb{R}.
\end{equation}
In addition, for $x\in \mathbb{R}\setminus\{0\}$ the following limits holds true:
\begin{equation}\label{eq:TV10}
\lim\limits_{r\to -\infty} \mathcal{P}^{H}_{\mathrm{TV}}(r)=1
\quad
\textrm{ and } 
\quad
\lim\limits_{r\to \infty} \mathcal{P}^{H}_{\mathrm{TV}}(r)=0.
\end{equation}
\end{theorem}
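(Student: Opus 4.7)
The plan is to exploit the Gaussian structure of both marginals together with the zero-homogeneity of the total variation distance (Lemma~\ref{lem:ptv}(ii)) in order to reduce the problem to a continuity statement for the total variation distance between two Gaussian laws whose parameters we can track explicitly.

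First I would use zero-homogeneity to divide both random variables by $\ve$, so that
\[
\ud^{\ve,H}_{\mathrm{TV}}(t;x)=\tv\!\left(\cN\!\left(\frac{e^{-\lambda t}x}{\ve},\,\frac{\sigma^2(t;\ve,H)}{\ve^2}\right),\,\cN(0,\mathcal{R}_{U^H}(0))\right).
\]
Evaluating at $t=t_{\ve}+r\cdot w_{\ve}$ with $t_{\ve}=\lambda^{-1}\ln(1/\ve)$ gives the identity $e^{-\lambda t}/\ve = e^{-\lambda r w_{\ve}}$, so the mean converges to $e^{-\lambda r w}x$ as $\ve\to 0^+$. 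For the variance I would invoke the explicit formula in Lemma~\ref{lem:ergasymp},
\[
\frac{\sigma^2(t;\ve,H)}{\ve^2}=\mathcal{R}_{U^H}(0)+e^{-2\lambda t}\mathcal{R}_{U^H}(0)-2e^{-\lambda t}\mathcal{R}_{U^H}(t),
\]
and use $t_{\ve}+r\cdot w_{\ve}\to \infty$ together with $\mathcal{R}_{U^H}(t)\to 0$ (indeed $\mathcal{R}_{U^H}(t)\sim t^{2H-2}$) to conclude that the variance tends to $\mathcal{R}_{U^H}(0)$.

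Next I would invoke continuity of the total variation distance between Gaussians with respect to their parameters: if $(m_{\ve},\sigma^2_{\ve})\to(m,\sigma^2)$ with $\sigma^2>0$, then $\tv(\cN(m_{\ve},\sigma^2_{\ve}),\cN(0,\sigma^2))\to \tv(\cN(m,\sigma^2),\cN(0,\sigma^2))$. This is a routine consequence of Scheff\'e's lemma applied to the explicit Gaussian densities, which converge pointwise and are uniformly bounded by an integrable envelope on compact sets; alternatively I would appeal to the basic properties of $\tv$ collected in Appendix~\ref{ap:tv}. Applying this with $m_{\ve}=e^{-\lambda r w_{\ve}}x$ and $\sigma^2_{\ve}=\sigma^2(t_{\ve}+r w_{\ve};\ve,H)/\ve^2$ yields~\eqref{eq:resulttv}.

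For the asymptotics~\eqref{eq:TV10} I would use the closed form $\tv(\cN(m,\sigma^2),\cN(0,\sigma^2))=2\Phi(|m|/(2\sigma))-1$, or, more generally, the fact that $\tv(\cN(m,\sigma^2),\cN(0,\sigma^2))$ is a continuous and strictly increasing function of $|m|$ tending to $0$ as $|m|\to 0$ and to $1$ as $|m|\to\infty$. Since $w>0$ and $x\neq 0$, one has $e^{-\lambda r w}x\to 0$ as $r\to +\infty$ and $|e^{-\lambda r w}x|\to \infty$ as $r\to -\infty$, which gives the two limits. The main obstacle, and the only nontrivial analytic input, is the careful verification that the variance term converges to $\mathcal{R}_{U^H}(0)$ uniformly enough in $r$ on compact sets, for which the decay $\mathcal{R}_{U^H}(t)=\mathrm{O}(t^{2H-2})$ from~\cite{Cheridito} is the decisive ingredient; everything else reduces to parameter continuity of Gaussian total variation.
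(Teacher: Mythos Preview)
Your proposal is correct and follows essentially the same route as the paper: use Lemma~\ref{lem:ergasymp} to identify the marginals as Gaussians, apply the zero-homogeneity of $\tv$ (Lemma~\ref{lem:ptv}(ii)) to divide by $\ve$, track the convergence of the mean and variance at $t=t_\ve+r\cdot w_\ve$, and conclude via continuity of the total variation between univariate Gaussians in their parameters (the paper cites Lemma~B.4 of~\cite{BALIU} for this, where you invoke Scheff\'e). One small remark: you do not need uniformity in $r$, and even mere boundedness of $\mathcal{R}_{U^H}(t)$ suffices since the relevant term is $e^{-\lambda t}\mathcal{R}_{U^H}(t)$.
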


\begin{remark}[Total variation profile in terms of the error function]\hfill

\noindent
Using Item~(i)~in~Lemma~B.2~in~Appendix~B of~\cite{BALIU} 
and Item~(ii)~in~Lemma~\ref{lem:ptv}
we obtain that, for every $r\in \mathbb{R}$,
\begin{equation}\label{e:totalvariationprofile}
\begin{split}
\mathcal{P}^{H}_{\mathrm{TV}}(r)=
\frac{2}{\sqrt{2\pi}}\int_{0}^{\frac{1}{2\sqrt{\mathcal{R}_U(0)}}e^{-\lambda r w}|x|} e^{-\frac{y^2}{2}} \ud y,
\end{split}
\end{equation}
which with the help of Dominated Convergence Theorem  yields~\eqref{eq:TV10}.
\end{remark}

\begin{proof}[Proof of Theorem~\ref{th:tv} (Cut-off convergence in the total variation distance)]
\hfill

\noindent
By Lemma~\ref{lem:ergasymp} and Item~(ii)~in~Lemma~\ref{lem:ptv}
we have
\begin{equation}
\begin{split}
\ud^{\ve,H}_{\mathrm{TV}}(t;x)&=\tv\left(X^{\ve,H}_t(x),X^{\ve,H}_\infty\right)\\
&=
\tv\left(
\cN\left(e^{-\lambda t}x,
\ve^2\left(\mathcal{R}_{U^H}(0) + e^{-2\lambda t}\mathcal{R}_{U^H}(0)-2e^{-\lambda t}\mathcal{R}_{U^H}(t)\right)
\right),
\,\cN\left(0,\ve^2 \mathcal{R}_{U^H}(0)\right)\right)\\
&=
\tv\left(
\cN\left(\frac{e^{-\lambda t}x}{\ve},
\mathcal{R}_{U^H}(0) + e^{-2\lambda t}\mathcal{R}_{U^H}(0)-2e^{-\lambda t}\mathcal{R}_{U^H}(t)
\right),
\,\cN\left(0, \mathcal{R}_{U^H}(0)\right)\right).
\end{split}
\end{equation}
The preceding equality with the help of Lemma~B.4 in Appendix~B of~\cite{BALIU} implies~\eqref{eq:resulttv}.
\end{proof}

\noindent In the sequel, we study the  behavior of
\begin{equation}
\ud^{\ve,H}_{\Wp}(t;x):=\frac{1}{\ve}
\Wp\left(X^{\ve,H}_t(x),
X^{\ve,H}_\infty\right),\quad t\geq 0
\end{equation}
for $p\geq 1$,
where $\Wp(\cdot,\cdot)$ denotes the Wasserstein distance of order $p$. We refer to Appendix~\ref{ap:wp} for the definition and basic properties of the Wasserstein distance.
The second main result of this section is the following.
\begin{theorem}[Cut-off convergence in the Wasserstein distance of order $p\geq 1$]\label{th:Wp}
\hfill

\noindent
Let $H\in (0,1)$ be fixed and $x\in \mathbb{R}$ be fixed.
For any $\ve\in (0,1)$ 
define $t_{\ve}$ as in~\eqref{e:cutofftimescale}.
Assume that $p\geq 1$.
For any $(w_{\ve})_{\ve\in(0,1)}$ such that $w_{\ve}\to w$ as $\ve\to 0^+$ with $w>0$ it follows that
\begin{equation}\label{eq:resultWp}
\lim\limits_{\ve\to 0^+}
\ud^{\ve,H}_{\Wp}(t_{\ve}+r\cdot w_{\ve};x)=
\mathcal{P}^{H}_{\Wp}(r),\quad r\in \mathbb{R},
\end{equation}
where
\begin{equation}
\mathcal{P}^{H}_{\Wp}(r):=\Wp\left(
\cN\left(e^{-\lambda r w}x,
\mathcal{R}_{U^H}(0)
\right),
\,\cN\left(0,\mathcal{R}_{U^H}(0)\right)\right),\quad r\in \mathbb{R}.
\end{equation}
In addition, for $x\in \mathbb{R}\setminus\{0\}$ the following limits holds true
\begin{equation}\label{eq:WA10}
\lim\limits_{r\to -\infty} \mathcal{P}^{H}_{\Wp}(r)=\infty
\quad
\textrm{ and } 
\quad
\lim\limits_{r\to \infty} \mathcal{P}^{H}_{\Wp}(r)=0.
\end{equation}
\end{theorem}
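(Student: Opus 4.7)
The plan is to mirror the architecture of the proof of Theorem~\ref{th:tv}, replacing the zero-homogeneity of $\tv$ by the positive-homogeneity and the shift-additivity of $\Wp$. First, by Lemma~\ref{lem:ergasymp}, both $X^{\ve,H}_t(x)$ and $X^{\ve,H}_\infty$ are one-dimensional Gaussian, with laws $\cN(e^{-\lambda t}x, \ve^2 \sigma^2(t;H))$ and $\cN(0, \ve^2 \mathcal{R}_{U^H}(0))$ respectively, where
\[
\sigma^2(t;H) := \mathcal{R}_{U^H}(0)(1 + e^{-2\lambda t}) - 2 e^{-\lambda t}\mathcal{R}_{U^H}(t).
\]
The positive-homogeneity $\Wp(cX, cY) = |c|\Wp(X,Y)$, recorded as part of the basic properties of the Wasserstein distance in Appendix~\ref{ap:wp}, absorbs the prefactor $1/\ve$ and produces
\[
\ud^{\ve,H}_{\Wp}(t;x) = \Wp\left(\cN\left(\frac{e^{-\lambda t}x}{\ve}, \sigma^2(t;H)\right),\, \cN(0, \mathcal{R}_{U^H}(0))\right).
\]

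Next, I would substitute $t = t_\ve + r\cdot w_\ve$. By the definition~\eqref{e:cutofftimescale} of $t_\ve$ one has $e^{-\lambda t_\ve} = \ve$, hence $e^{-\lambda t}/\ve = e^{-\lambda r w_\ve} \to e^{-\lambda r w}$ as $\ve \to 0^+$. Simultaneously $\sigma^2(t_\ve + r w_\ve; H) \to \mathcal{R}_{U^H}(0)$, using $e^{-\lambda t_\ve} \to 0$ together with the decay $\mathcal{R}_{U^H}(t) \sim t^{2H-2} \to 0$ stated after Lemma~\ref{lem:ergasymp}. To transfer these parameter convergences into a limit for $\Wp$ itself, I would invoke a continuity lemma for the $\Wp$-distance between one-dimensional Gaussian measures --- the natural analogue for the Wasserstein distance of Lemma~B.4 in~\cite{BALIU} used in the $\tv$-proof --- which follows from the explicit quantile representation
\[
\Wp^p(\cN(m_1,\sigma_1^2),\cN(m_2,\sigma_2^2)) = \int_0^1 |(m_1-m_2) + (\sigma_1-\sigma_2)\Phi^{-1}(u)|^p \ud u
\]
and dominated convergence, with an integrable dominant of the form $C(1 + |\Phi^{-1}(u)|^p)$ on $[0,1]$.

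For the profile asymptotics~\eqref{eq:WA10}, I would exploit the fact that the two Gaussian marginals appearing in $\mathcal{P}^H_{\Wp}(r)$ share the common variance $\mathcal{R}_{U^H}(0)$. Writing the first as $e^{-\lambda r w}x + Z$ with $Z \sim \cN(0, \mathcal{R}_{U^H}(0))$, the shift-additivity $\Wp(v + Z, Z) = |v|$ from Lemma~\ref{lem:basicwp}(iii) collapses the profile to the closed-form expression
\[
\mathcal{P}^H_{\Wp}(r) = |e^{-\lambda r w}x|, \qquad r \in \mathbb{R},
\]
from which both claimed limits in~\eqref{eq:WA10} follow immediately for $x \neq 0$ and $w > 0$.

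The main obstacle is not conceptual but bookkeeping: the continuity lemma for $\Wp$ between Gaussians with simultaneously varying means and variances must cover all $p \geq 1$ uniformly, which the quantile representation delivers but presupposes that a suitable $\Wp$-version is already available in Appendix~\ref{ap:wp}. Everything else is essentially algebraic, since the positive-homogeneity disposes of the noise intensity $\ve$ and the shift-additivity reduces the profile to the norm of the deterministic linear drift, in line with the universality discussion preceding Theorem~\ref{th:Wp}.
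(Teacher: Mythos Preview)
Your proposal is correct and follows essentially the same architecture as the paper's proof: identify the Gaussian laws via Lemma~\ref{lem:ergasymp}, use the one-homogeneity of $\Wp$ (Lemma~\ref{lem:basicwp}(ii)) to absorb the factor $1/\ve$, and then pass to the limit in the parameters. The only minor difference is in the continuity step: where you invoke the quantile representation and dominated convergence, the paper simply cites the shift-and-scaling continuity Item~(iv) of Lemma~\ref{lem:basicwp}, which already provides the needed Gaussian continuity for all $p\geq 1$ and dissolves the bookkeeping concern you flagged.
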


\begin{remark}[Wasserstein profile for $p\geq 1$ in terms of the exponential function]\hfill

\noindent
By Item(iii)~in~Lemma~\ref{lem:basicwp} for $p\geq 1$ and every $r\in \mathbb{R}$ it follows that
$\mathcal{P}^{H}_{\Wp}(r)=e^{-\lambda r w}|x|$,
from which we obtain~\eqref{eq:WA10}. 
We observe that $\mathcal{P}^{H}_{\Wp}(r)$ does not depend on $p$
and is in this sense universal among all Wasserstein distances of order $p\geq 1$.
\end{remark}

\begin{proof}[Proof of 
Theorem~\ref{th:Wp} (Cut-off convergence in the Wasserstein distance of order $p\geq 1$)]
\hfill

\noindent
By Lemma~\ref{lem:ergasymp} and Item~(ii)~in Lemma~\ref{lem:basicwp} 
we have
\begin{equation}
\begin{split}
\ud^{\ve,H}_{\Wp}(t;x)&=\frac{1}{\ve}\Wp\left(X^{\ve,H}_t(x),X^{\ve,H}_\infty\right)\\
&=
\frac{1}{\ve}
\Wp\left(
\cN\left(e^{-\lambda t}x,
\ve^2\left(\mathcal{R}_{U^H}(0) + e^{-2\lambda t}\mathcal{R}_{U^H}(0)-2e^{-\lambda t}\mathcal{R}_{U^H}(t)\right)
\right),
\,\cN\left(0,\ve^2 \mathcal{R}_{U^H}(0)\right)\right)\\
&=
\Wp\left(
\cN\left(\frac{e^{-\lambda t}x}{\ve},
\mathcal{R}_{U^H}(0) + e^{-2\lambda t}\mathcal{R}_{U^H}(0)-2e^{-\lambda t}\mathcal{R}_{U^H}(t)
\right),
\,\cN\left(0, \mathcal{R}_{U^H}(0)\right)\right).
\end{split}
\end{equation}
The preceding equality with the help of Item~(iv) in~Lemma~\ref{lem:basicwp} implies~\eqref{eq:resultWp}.
\end{proof}

\section{Cut-off convergence for general linear processes in law}\label{sec:mainresults}

In this section, we establish the small noise cut-off phenomenon in the total variation distance and 
in the Wassertein distance of order $p\geq 1$
for general linear process in law of the type~\eqref{d:X} given below.
In the sequel, $m$ denotes a fixed positive integer,
$\langle \cdot , \cdot \rangle $ denotes the standard inner product for $\mathbb{R}^m$,
and
 $\|\cdot\|$ denotes the Euclidean norm in $\mathbb{R}^m$.
Moreover, we always assume that $-\Lambda\in \mathbb{R}^{m\times m}$ is
a Routh--Hurwitz matrix. That is, all the eigenvalues of $-\Lambda$ have negative real parts.
For each $\ve>0$ and $x\in \mathbb{R}^m$
we consider the following linear continuous-time stochastic process $X^{\ve,x}:=(X^{\ve}_t(x))_{t\geq 0}$ on $\mathbb{R}^m$ in law defined by
\begin{equation}\label{d:X}
X^{\ve}_t(x)\stackrel{\mathrm{Law}}{=} e^{-\Lambda t}x+\ve S_t\quad \textrm{ for all }\quad t\geq 0,
\end{equation}
where $S=(S_t)_{t\geq 0}$ is an arbitrary continuous-time stochastic process taking values on $\mathbb{R}^m$ and satisfying $\mathbb{E}[S_t]=0_m$ for all $t\geq 0$, with $0_m$ denoting the  zero vector in $\mathbb{R}^m$.
For classes of examples of processes $S$ we refer to Section~\ref{sec:examples}.
We observe that
$\mathbb{E}[X^{\ve}_t(x)]=e^{-\Lambda t}x$ for all $x\in \mathbb{R}^m$ and $t\geq 0$.
For each $x\in \mathbb{R}^m\setminus\{0_m\}$ we study the
asymptotic behavior of $e^{-\Lambda t}x$ for $t\gg 1$.

\begin{lemma}[Hartman–Grobman asymptotics]\label{lem:Har}
\hfill

\noindent
Let $-\Lambda\in \mathbb{R}^{m\times m}$ be a Routh–Hurwitz matrix. For $x\in \mathbb{R}^m\setminus\{0_m\}$ there exist
\begin{itemize}
\item[(1)] a positive rate $\lambda:=\lambda(x)$,
\item[(2)]  size and multiplicity parameters 
$\ell:=\ell(x)$ and $m_*:=m_*(x)$, respectively, 
satisfying
$\ell,m_*\in \{1,\ldots,m\}$,
\item[(3)]
angular velocities $\theta_1:=\theta_{1}(x),\ldots,\theta_{m_*}:=\theta_{m_*}$ in $\mathbb{R}$ such that whenever $\theta_j\neq 0$ we have $\theta_{j+1}=-\theta_{j}$,
\item[(4)] 
linearly independent vectors 
$v_1(x):=v_1,\ldots, v_{m_*}(x):=v_{m_*}$ in $\mathbb{C}^{m}$,
\end{itemize}
such that
\begin{equation}
\lim\limits_{t\to \infty}
\left\|
\frac{e^{\lambda t}}{t^{\ell-1}}e^{-\Lambda t}x-v(t;x)  \right\|=0,
\end{equation} 
where 
\begin{equation}\label{ec:vdef}
v(t;x):=\sum_{j=1}^{m_*} e^{\ii \theta_j t}v_{j}\quad \textrm{ for all }\quad  t\geq 0.
\end{equation}
Moreover, the following  holds true:
\begin{equation}\label{eq:0bound}
0<\liminf\limits_{t\to \infty}  \|v(t;x)\|\leq \limsup\limits_{t\to \infty}  \|v(t;x)\|<\infty.   
\end{equation}
\end{lemma}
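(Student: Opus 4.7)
The plan is to read the asymptotic behavior off the Jordan canonical form of $-\Lambda$. Writing $-\Lambda = PJP^{-1}$ with $J$ in Jordan normal form, I would obtain the standard identity
\begin{equation*}
e^{-\Lambda t}x \;=\; \sum_{\mu\in\sigma(-\Lambda)} e^{\mu t}\,q_{\mu}(t),
\qquad
q_{\mu}(t):=\sum_{k=0}^{k_{\mu}-1}\frac{t^{k}}{k!}\,N_{\mu}^{k}\,\Pi_{\mu}\,x,
\end{equation*}
where $\Pi_{\mu}$ is the spectral projector onto the generalized eigenspace $E_{\mu}$ of $\mu$, $N_{\mu}:=(-\Lambda-\mu I)|_{E_{\mu}}$ is nilpotent, and $k_{\mu}$ is its nilpotency index. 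Since $x\ne 0_m$ and every eigenvalue has strictly negative real part, at least one projection $\Pi_{\mu}x$ is non-zero, and this allows me to define
\begin{equation*}
\lambda(x):=\min\{-\mathrm{Re}(\mu):\Pi_{\mu}x\ne 0\}>0,
\qquad
\ell(x)-1:=\max\{\deg q_{\mu}:\mathrm{Re}(\mu)=-\lambda,\;\Pi_{\mu}x\ne 0\}.
\end{equation*}
I then enumerate the dominant eigenvalues $\mu_{1},\ldots,\mu_{m_{*}}$ as those with $\mathrm{Re}(\mu_{j})=-\lambda$ and $\deg q_{\mu_{j}}=\ell-1$, writing $\mu_{j}=-\lambda+\ii\theta_{j}$ and setting $v_{j}:=\tfrac{1}{(\ell-1)!}\,N_{\mu_{j}}^{\ell-1}\,\Pi_{\mu_{j}}\,x\neq 0_m$; the inclusions $\ell,m_{*}\in\{1,\ldots,m\}$ are automatic.

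With these identifications in hand, I would split the decomposition of $e^{-\Lambda t}x$ into three pieces: the dominant contribution from the $\mu_{j}$ carrying the top-degree coefficient $v_{j}$; the sub-dominant contribution at the same decay rate $-\lambda$ but of polynomial degree at most $\ell-2$ (hence bounded by $C\,t^{\ell-2}e^{-\lambda t}$); and the remainder at strictly faster decay rates $-\alpha<-\lambda$ (bounded by $C\,t^{m-1}e^{-\alpha t}$). Multiplying by $e^{\lambda t}/t^{\ell-1}$, the first piece equals exactly $\sum_{j=1}^{m_{*}} e^{\ii\theta_{j} t}\,v_{j}= v(t;x)$, while the second and third vanish as $t\to\infty$, yielding the claimed limit. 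The pairing property of the $\theta_{j}$ is then a consequence of the reality of $-\Lambda$ and $x$: conjugating the whole identity shows that whenever $\mu_{j}$ is dominant with coefficient $v_{j}$, so is $\overline{\mu_{j}}$ with coefficient $\overline{v_{j}}$, and reordering so that conjugate pairs are adjacent gives $\theta_{j+1}=-\theta_{j}$ whenever $\theta_{j}\ne 0$.

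To conclude, linear independence of $\{v_{j}\}$ follows immediately from the fact that each $v_{j}\in E_{\mu_{j}}\setminus\{0_m\}$ lies in a distinct generalized eigenspace, and distinct generalized eigenspaces are in direct sum. The bounds in \eqref{eq:0bound} then split into two steps: the upper bound is the triangle inequality $\|v(t;x)\|\le\sum_{j}\|v_{j}\|<\infty$. For the lower bound, which I view as the most delicate point, I would use a compactness argument: the continuous map $g\colon(S^{1})^{m_{*}}\to[0,\infty)$, $g(\zeta_{1},\ldots,\zeta_{m_{*}}):=\bigl\|\sum_{j=1}^{m_{*}}\zeta_{j}v_{j}\bigr\|$, is strictly positive on the whole torus, because the $\zeta_{j}\in S^{1}$ never vanish and the $v_{j}$ are linearly independent. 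Compactness of $(S^{1})^{m_{*}}$ gives a positive minimum $c>0$, and evaluating at $\zeta_{j}=e^{\ii\theta_{j} t}$ yields $\|v(t;x)\|\ge c$ uniformly in $t\ge 0$. The main technical obstacle is the simultaneous bookkeeping of decay rate and polynomial degree in the first paragraph; once the tuple $(\lambda,\ell,m_{*},\theta_{j},v_{j})$ is pinned down, the remainder of the argument reduces to routine estimates and the compactness argument just outlined.
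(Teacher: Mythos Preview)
Your proposal is correct and complete: the Jordan-form decomposition, the identification of the dominant rate and polynomial degree, the splitting into dominant/sub-dominant/faster-decaying pieces, the conjugate-pairing argument, the linear independence via direct-sum of generalized eigenspaces, and the torus-compactness lower bound all go through as you describe. The paper does not actually prove this lemma but refers to Lemma~B.1 in Appendix~B of~\cite{BJ1}, noting only that the coercivity hypothesis imposed there is never used beyond the Routh--Hurwitz property; your self-contained Jordan-form argument is precisely the standard route and is essentially what the cited reference carries out.
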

A version of this lemma is established as Lemma~B.1 in Appendix~B of~\cite{BJ1}. In~\cite{BJ1}, the result was proven under an additional coercivity assumption for $\Lambda$. That is, in~\cite{BJ1} it is assumed that there exists $\delta>0$ satisfying 
$\langle z,\Lambda z \rangle\geq \delta \|z\|^2$ for all $z\in \mathbb{R}^m$.
However, inspecting the proof line by line one can easily see that the authors only use the fact that the matrix $-\Lambda$ is a Routh--Hurwitz matrix.

Lemma~\ref{lem:Har} states roughly that the rate of the long-term dynamics is determined by $\lambda$, the total number and the size (in the sense of geometric multiplicity) of those Jordan blocks of $\Lambda$ with eigenvalues which have precisely the (smallest) real part $\lambda$, while the bounded limiting object is determined exclusively by the angular velocities, that is the imaginary parts of the aforementioned eigenvalues.
Also, we emphasize that the angular velocities $\theta_1,\ldots,\theta_{m_*}$ take values in $\mathbb{R}$ instead of taking values on $[0,2\pi)$, due to the fact that the map $t\mapsto e^{\ii \theta t}$ is not $2\pi$-periodic for $\theta\not\in \mathbb{Z}$. 
Compared to the results given in~\cite{BJ1}, the present setting of Lemma~\ref{lem:Har} allows to include more examples of interest such as Jacobi chains of oscillators, the Brownian gyrator, or more general linear stable networks, see for instance~\cite{BH23,BHPWA}. 

\noindent For $x\in \mathbb{R}^m\setminus\{0_m\}$
we define the $\omega$-limit set of $x$ under the continuous-time deterministic dynamics  $(v(t;x))_{t\geq 0}$ given in~\eqref{ec:vdef} by
\begin{equation}\label{eq:basin}
\begin{split}
\omega(x):&=\left\{v\in \mathbb{R}^m: \textrm{$v$ is an accumulation point of } (v(t;x))_{t\geq 0}\right\}, 
\end{split}
\end{equation}
that is, $v\in \omega(x)$ if and only if 
there exists a sequence $(t_n)_{n\in \mathbb{N}}$ such that $t_n\to\infty$ and 
$v(t_n;x)\to v$ as $n\to \infty$.
Note that 
\begin{equation}\label{eq:novacio}
\omega(x)\neq \emptyset
\end{equation}
due to 
$\sup_{t\geq 0}\|v(t;x)\|\leq \sum_{j=1}^{m_*} \|v_{j}\|$
and the Bolzano--Weierstrass Theorem.
Moreover, 
\begin{equation}\label{eq:nozero}
0_m\not \in\omega(x)
\end{equation}
due to the lower bound in~\eqref{eq:0bound}.
We point out that for $m=1$, we have $m_*=1$, $\ell=1$, $\theta_1=\ldots \theta_{m_*}=0$, $\lambda=\lambda_1$ and $v_1=x$.

\noindent In the sequel, we define the cut-off time scale.
Let $x\in \mathbb{R}^d\setminus\{0_m\}$ be fixed.
For any $\ve\in (0,1)$ 
set
\begin{equation}\label{e:cutti}
\begin{split} 
t^*_\ve:=\frac{1}{\lambda}\ln\left(\frac{1}{\ve}\right)\quad \textrm{ and then define }\quad
t^{\mathsf{cut}}_{\ve}:=t^*_\ve+\left(\frac{\ell-1}{\lambda}\right)
\ln(\lambda t^*_\ve)-\frac{1}{\lambda}\ln\left(\sigma_{t^*_\ve}\right),
\end{split}
\end{equation}
where the $x$-depending constants $\lambda:=\lambda_x$ and  $\ell:=\ell_x$ 
are defined in Lemma~\ref{lem:Har}.

\subsection{Multivariate cut-off phenomenon in the total variation distance}

The first main result in this subsection  states sufficient conditions for the window cut-off in the total variation distance.

\begin{theorem}[Window cut-off convergence in the total variation distance]\label{th:tvgeneral}
\hfill

\noindent
Assume that there exists a deterministic function $\sigma:(0,\infty)\to (0,\infty)$ and a random vector $Z$ on $\mathbb{R}^m$
satisfying
\begin{equation}\label{eq:tvhyp}
\lim\limits_{t\to \infty} \tv\left(\frac{S_t}{\sigma_t},Z\right)=0.
\end{equation}
In addition, assume that
for any $r>0$ it follows that
$\lim_{t\to \infty} \frac{\sigma_{t}}{\sigma_{t+r}}=1$.
Then for any $(w_{\ve})_{\ve\in(0,1)}$ such that $w_{\ve}\to w$ as $\ve\to 0^+$ with $w>0$ it follows that
\begin{equation}\label{e:infsup}
\begin{split}
\liminf_{\ve\to 0^+}
\tv\left(\frac{X^{\ve}_{t^{\mathsf{cut}}_{\ve}+r\cdot w_{\ve}}(x)}{\ve\sigma_{t^{\mathsf{cut}}_{\ve}+r\cdot w_{\ve}}}, Z\right)&=\tv\left(\lambda^{1-\ell}e^{-\lambda r w}\check{v}(x)+Z,Z\right),\\
\limsup_{\ve\to 0^+}
\tv\left(\frac{X^{\ve}_{t^{\mathsf{cut}}_{\ve}+r\cdot w_{\ve}}(x)}{\ve\sigma_{t^{\mathsf{cut}}_{\ve}+r\cdot w_{\ve}}}, Z\right)&=\tv\left(\lambda^{1-\ell}e^{-\lambda r w}\hat{v}(x)+Z,Z\right)
\end{split}
\end{equation}
for some $\check{v}(x),\hat{v}(x)\in \omega(x)$, where $\omega(x)$ is given in~\eqref{eq:basin} and $t^{\mathsf{cut}}_{\ve}$
is defined in~\eqref{e:cutti}.
In addition, the following limits holds true:
\begin{equation}\label{eq:perfiltv1}
\begin{split}
\lim_{r\to -\infty}\liminf\limits_{\ve\to 0^+}
\tv\left(\frac{X^{\ve}_{t^{\mathsf{cut}}_{\ve}+r\cdot w_{\ve}}(x)}{\ve\sigma_{t^{\mathsf{cut}}_{\ve}+r\cdot w_{\ve}}}, Z\right)
&=1,\\
\lim_{r\to \infty}\limsup\limits_{\ve\to 0^+}
\tv\left(\frac{X^{\ve}_{t^{\mathsf{cut}}_{\ve}+r\cdot w_{\ve}}(x)}{\ve\sigma_{t^{\mathsf{cut}}_{\ve}+r\cdot w_{\ve}}}, Z\right)
&=0.
\end{split}
\end{equation}
\end{theorem}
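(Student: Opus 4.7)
The plan is to reduce the statement to a purely deterministic shift problem and identify the $\liminf/\limsup$ via the Hartman--Grobman accumulation points in $\omega(x)$. Write $\tau_\ve := t^{\mathsf{cut}}_\ve + r\cdot w_\ve$ and $a^\ve(r) := e^{-\Lambda \tau_\ve} x/(\ve\, \sigma_{\tau_\ve})$, a deterministic vector in $\mathbb{R}^m$. By the defining relation~\eqref{d:X}, the zero-homogeneity of the total variation distance under nonzero scalings (Lemma~\ref{lem:ptv}(ii)) and its invariance under deterministic translations, one obtains
\begin{equation*}
\tv\!\left(\frac{X^\ve_{\tau_\ve}(x)}{\ve\, \sigma_{\tau_\ve}},\, Z\right)
= \tv\!\left(a^\ve(r) + \frac{S_{\tau_\ve}}{\sigma_{\tau_\ve}},\, Z\right)
= \tv\!\left(\frac{S_{\tau_\ve}}{\sigma_{\tau_\ve}},\, Z - a^\ve(r)\right).
\end{equation*}
By the triangle inequality for $\tv$ combined with the hypothesis~\eqref{eq:tvhyp}, this quantity differs from $\tv(Z + a^\ve(r),\, Z)$ by at most $\tv(S_{\tau_\ve}/\sigma_{\tau_\ve}, Z) \to 0$, so the whole problem reduces to analysing $\tv(Z + a^\ve(r), Z)$ as $\ve \to 0^+$.

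Next, I would compute $a^\ve(r)$ explicitly. A direct exponentiation of~\eqref{e:cutti} yields $e^{-\lambda t^{\mathsf{cut}}_\ve} = \ve\, (\lambda t^*_\ve)^{-(\ell-1)}\, \sigma_{t^*_\ve}$, hence
\begin{equation*}
\frac{\tau_\ve^{\ell-1}\, e^{-\lambda \tau_\ve}}{\ve\, \sigma_{\tau_\ve}}
= \lambda^{1-\ell} \left(\frac{\tau_\ve}{t^*_\ve}\right)^{\ell-1} \frac{\sigma_{t^*_\ve}}{\sigma_{\tau_\ve}}\, e^{-\lambda r w_\ve}.
\end{equation*}
Subexponentiality of $\sigma$ forces $\tau_\ve/t^*_\ve \to 1$, and the hypothesis $\sigma_t/\sigma_{t+r} \to 1$, suitably extended to the unbounded shift $\tau_\ve - t^*_\ve = O(\ln t^*_\ve) + O(\ln \sigma_{t^*_\ve}) + r w_\ve$, yields $\sigma_{t^*_\ve}/\sigma_{\tau_\ve} \to 1$. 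Combining this with the convergence rate provided by Lemma~\ref{lem:Har} then gives
\begin{equation*}
a^\ve(r) \;=\; \lambda^{1-\ell}\, e^{-\lambda r w}\, v(\tau_\ve; x) + o(1) \qquad \text{as } \ve \to 0^+.
\end{equation*}

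The identification of the limiting distances is then a compactness argument. By~\eqref{eq:0bound}, the orbit $\{v(\tau_\ve; x)\}_\ve$ is uniformly bounded in $\mathbb{R}^m$, so every subsequence admits a further subsequence converging to some point of $\omega(x)$. Along a subsequence $\ve_n \to 0^+$ that attains the $\liminf$ in~\eqref{e:infsup}, a Bolzano--Weierstrass extraction yields $v(\tau_{\ve_n}; x) \to \check v(x) \in \omega(x)$, so that $a^{\ve_n}(r) \to \lambda^{1-\ell} e^{-\lambda r w} \check v(x)$; the continuity of $a \mapsto \tv(Z+a, Z)$ (collected in Appendix~\ref{ap:tv}) then delivers the first identity in~\eqref{e:infsup}, and the $\limsup$ identity follows analogously with $\hat v(x) \in \omega(x)$. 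For the boundary limits~\eqref{eq:perfiltv1}, as $r \to +\infty$ the prefactor $e^{-\lambda r w} \to 0$ forces $a^\ve(r) \to 0$ uniformly in $\ve$, whence $\tv(Z + a^\ve(r), Z) \to 0$; as $r \to -\infty$ the prefactor explodes while $\|v(\tau_\ve; x)\|$ stays bounded below by~\eqref{eq:0bound}, so $\|a^\ve(r)\| \to \infty$ and the mass of $Z$ and $Z + a^\ve(r)$ eventually concentrates on disjoint balls, forcing $\tv \to 1$ by tightness of $Z$.

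The main obstacles are twofold. First, extending the slow-variation assumption $\sigma_t/\sigma_{t+r} \to 1$ from constant shifts to the growing but subexponential shift $\tau_\ve - t^*_\ve$ requires a uniform (Karamata-type) argument under some additional regularity of $\sigma$, and this is what keeps $t^{\mathsf{cut}}_\ve$ meaningful as a cut-off time scale. Second, the continuity of the shift functional $a \mapsto \tv(Z + a, Z)$ is not automatic when $Z$ has atoms, so one must exploit the fact that the TV-convergence in~\eqref{eq:tvhyp} forces $Z$ to be absolutely continuous. A subtler methodological point is that the accumulation vectors $\check v(x), \hat v(x) \in \omega(x)$ produced by the compactness argument depend a priori on $r$; harvesting $r$-independent representatives would require an additional diagonal extraction or a symmetry property of the law of $Z$.
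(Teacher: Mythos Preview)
Your proposal is correct and mirrors the paper's proof: the same triangle-inequality reduction from $\tv(X^\ve_{\tau_\ve}(x)/(\ve\sigma_{\tau_\ve}),Z)$ to $\tv(a^\ve(r)+Z,Z)$, the same Hartman--Grobman asymptotic identification of $a^\ve(r)$ with $\lambda^{1-\ell}e^{-\lambda r w}v(\tau_\ve;x)+o(1)$, and the same Bolzano--Weierstrass extraction of $\check v(x),\hat v(x)\in\omega(x)$ along subsequences realising the $\liminf/\limsup$. Of your three flagged obstacles, the first two are genuine technical points that the paper also leaves implicit (the limit~\eqref{ec:limt} is asserted without detailing the passage from fixed to growing shifts in the $\sigma$-ratio, and Lemma~\ref{lem:ptv}(iii) tacitly assumes $Z$ has a density via the $L^1$-continuity of translations), while the third is a non-issue: the theorem only asserts the \emph{existence} of $\check v(x),\hat v(x)\in\omega(x)$ for each fixed $r$, so $r$-dependence is permitted by the statement.
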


\begin{remark}[Growth of the function $\sigma=(\sigma_t)_{t\geq 0}$ at infinity]
\hfill

\noindent
Since for any $r>0$ we assume $\lim_{t\to \infty}\frac{\sigma(t+r)}{\sigma(t)}= 1$, it is not hard to see that $\sigma \circ \ln$ is a slowly varying function at infinity.
By the Karamata Representation Theorem
we have that
$\sigma_t=\mathrm{o}(e^{ct})$  as $t\to \infty$ for any $c>0$.
\end{remark}

\noindent The second main result in this subsection states a characterization of profile cut-off  in the total variation distance in terms of the geometric shape of the $\omega$-limit set $\omega(x)$. 
\begin{theorem}[Profile cut-off convergence in the total variation distance]\label{th:ptvgeneral}
\hfill

\noindent
Suppose the assumptions of Theorem~\ref{th:tvgeneral} hold true and recall the notation.
Then the following statements are equivalent. The limit
\begin{equation}\label{e:tv}
\lim\limits_{\ve\to 0^+}
\tv\left(\frac{X^{\ve}_{t^{\mathsf{cut}}_{\ve}+r\cdot w_{\ve}}(x)}{\ve\sigma_{t^{\mathsf{cut}}_{\ve}+r\cdot w_{\ve}}}, Z\right)=\tv\left(\lambda^{1-\ell}e^{-\lambda r w}v(x)+Z,Z\right)\quad \textrm{ holds true}
\end{equation}
for any element $v(x)\in \omega(x)$,
if and only if, for any fixed positive $\rho$ the map 
\begin{equation}\label{e:tvp}
\omega(x)\ni v\mapsto \tv(\rho v+Z,Z)\quad \textrm{ is constant.}   
\end{equation}
\end{theorem}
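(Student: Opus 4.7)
The strategy is to extract Theorem~\ref{th:ptvgeneral} as an essentially direct corollary of Theorem~\ref{th:tvgeneral}, which has already performed the analytic heavy lifting of identifying the $\liminf$ and $\limsup$ of the normalized discrepancy along the cut-off window with evaluations of a natural functional on the $\omega$-limit set $\omega(x)$. The remaining work of the present theorem is then essentially a bookkeeping exercise.

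To set up the notation, I would abbreviate $F_\rho(v) := \tv(\rho v + Z, Z)$ for $\rho > 0$ and $v \in \omega(x)$, and, given a window $w_\ve \to w > 0$ and a parameter $r \in \mathbb{R}$, set $\rho_r := \lambda^{1-\ell} e^{-\lambda r w}$. With this notation, the two identities in~\eqref{e:infsup} of Theorem~\ref{th:tvgeneral} read
\begin{equation}
\begin{split}
\liminf_{\ve \to 0^+} \tv\Bigl(\tfrac{X^{\ve}_{t^{\mathsf{cut}}_{\ve} + r w_{\ve}}(x)}{\ve \sigma_{t^{\mathsf{cut}}_{\ve} + r w_{\ve}}}, Z\Bigr) &= F_{\rho_r}(\check v(x)), \\
\limsup_{\ve \to 0^+} \tv\Bigl(\tfrac{X^{\ve}_{t^{\mathsf{cut}}_{\ve} + r w_{\ve}}(x)}{\ve \sigma_{t^{\mathsf{cut}}_{\ve} + r w_{\ve}}}, Z\Bigr) &= F_{\rho_r}(\hat v(x))
\end{split}
\end{equation}
with $\check v(x), \hat v(x) \in \omega(x)$.

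For the direction $\eqref{e:tvp}\Rightarrow\eqref{e:tv}$, the assumed constancy of $F_{\rho_r}$ on $\omega(x)$ applied at $\rho = \rho_r$ immediately yields $F_{\rho_r}(\check v(x)) = F_{\rho_r}(\hat v(x))$, so the $\liminf$ and $\limsup$ above coincide, the full $\ve \to 0^+$ limit exists, and its value equals $F_{\rho_r}(v)$ for any $v \in \omega(x)$, which is precisely~\eqref{e:tv}. For the converse $\eqref{e:tv}\Rightarrow\eqref{e:tvp}$, the hypothesis states that the right-hand side of~\eqref{e:tv}, namely $F_{\rho_r}(v(x))$, takes the same numerical value for every choice of $v(x) \in \omega(x)$, which is already the constancy of $F_{\rho_r}$ on $\omega(x)$ for that particular $r$. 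Since $w > 0$ is fixed, the map $r \in \mathbb{R} \mapsto \rho_r = \lambda^{1-\ell} e^{-\lambda r w} \in (0, \infty)$ is a bijection, so letting $r$ vary over $\mathbb{R}$ promotes this to the constancy of $F_\rho$ on $\omega(x)$ for every $\rho > 0$, i.e.~\eqref{e:tvp}.

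The main obstacle is not located in this theorem itself but is already packaged inside Theorem~\ref{th:tvgeneral}: the delicate point is that the $\liminf$ and $\limsup$ of the discrepancy are realized exactly as evaluations of $F_{\rho_r}$ at genuine accumulation points $\check v(x), \hat v(x)$ of the linearized dynamics $(v(t;x))_{t\geq 0}$ inside $\omega(x)$, thereby ensuring that no hidden accumulation point outside $\omega(x)$ can spoil the characterization. Once that identification is granted, Theorem~\ref{th:ptvgeneral} amounts to equating the $\liminf$ and $\limsup$ via constancy of $F_{\rho_r}$ and then sweeping $\rho$ through all of $(0, \infty)$ by varying $r$.
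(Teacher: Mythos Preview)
Your proposal is correct and follows essentially the same approach as the paper. Both directions match: for $\eqref{e:tvp}\Rightarrow\eqref{e:tv}$ the paper simply says ``\eqref{e:infsup} implies \eqref{e:tv}'', which is exactly your argument that constancy of $F_{\rho_r}$ forces $\liminf=\limsup$; for $\eqref{e:tv}\Rightarrow\eqref{e:tvp}$ the paper phrases it via the accumulation-point set $\{F_{\rho_r}(\widetilde v):\widetilde v\in\omega(x)\}$ being a singleton, whereas you read the constancy directly from the hypothesis (the limit is one number, equal to $F_{\rho_r}(v)$ for every $v$), and you make explicit the bijection $r\mapsto\rho_r$ onto $(0,\infty)$ that the paper leaves implicit.
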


\subsection{Multivariate cut-off phenomenon in the Wasserstein distance}

The first main result in this subsection states sufficient conditions for the window cut-off in the Wasserstein distance of order $p\geq 1$.
\begin{theorem}[Window cut-off convergence in the Wasserstein distance 
 of order $p\geq 1$]\label{th:Wageneral}
\hfill

\noindent
Assume that there exists a deterministic function $\sigma:(0,\infty)\to (0,\infty)$ and a random vector $Z$ on $\mathbb{R}^m$
satisfying
\begin{equation}
\label{eq:Wanew}
\lim\limits_{t\to \infty} \Wp\left(\frac{S_t}{\sigma_t},Z\right)=0.
\end{equation}
In addition, assume that
for any $r>0$ it follows that
$\lim\limits_{t\to \infty} \frac{\sigma_{t}}{\sigma_{t+r}}=1$.
Let $p\geq 1$ be fixed.
Then for any $(w_{\ve})_{\ve\in(0,1)}$ such that $w_{\ve}\to w$ as $\ve\to 0^+$ with $w>0$ it follows that
\begin{equation}
\begin{split}
\liminf_{\ve\to 0^+}
\Wp\left(\frac{X^{\ve}_{t^{\mathsf{cut}}_{\ve}+r\cdot w_{\ve}}(x)}{\ve\sigma_{t^{\mathsf{cut}}_{\ve}+r\cdot w_{\ve}}}, Z\right)&=\Wp\left(\lambda^{1-\ell}e^{-\lambda r w}\check{v}(x)+Z,Z\right)=\lambda^{1-\ell}e^{-\lambda r w}\|\check{v}(x)\|,\\
\limsup_{\ve\to 0^+}
\Wp\left(\frac{X^{\ve}_{t^{\mathsf{cut}}_{\ve}+r\cdot w_{\ve}}(x)}{\ve\sigma_{t^{\mathsf{cut}}_{\ve}+r\cdot w_{\ve}}}, Z\right)&=\Wp\left(\lambda^{1-\ell}e^{-\lambda r w}\hat{v}(x)+Z,Z\right)=\lambda^{1-\ell}e^{-\lambda r w}\|\hat{v}(x)\|
\end{split}
\end{equation}
for some $\check{v}(x),\hat{v}(x)\in \omega(x)$, where $\omega(x)$ is given in~\eqref{eq:basin}
and $t^{\mathsf{cut}}_{\ve}$
is defined in~\eqref{e:cutti}.
In addition, the following limits holds true:
\begin{equation}
\begin{split}
\lim_{r\to -\infty}\liminf\limits_{\ve\to 0^+}
\Wp\left(\frac{X^{\ve}_{t^{\mathsf{cut}}_{\ve}+r\cdot w_{\ve}}(x)}{\ve\sigma_{t^{\mathsf{cut}}_{\ve}+r\cdot w_{\ve}}}, Z\right)
&=1,\\
\lim_{r\to \infty}\limsup\limits_{\ve\to 0^+}
\Wp\left(\frac{X^{\ve}_{t^{\mathsf{cut}}_{\ve}+r\cdot w_{\ve}}(x)}{\ve\sigma_{t^{\mathsf{cut}}_{\ve}+r\cdot w_{\ve}}}, Z\right)
&=0.
\end{split}
\end{equation}
\end{theorem}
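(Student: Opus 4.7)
The plan is to use the decomposition $X^\ve_t(x) \stackrel{\mathrm{Law}}{=} e^{-\Lambda t}x + \ve S_t$ from~\eqref{d:X} together with the shift-additivity of $\Wp$ (Lemma~\ref{lem:basicwp}(iii)) in order to reduce the problem to a deterministic asymptotic analysis of $\|e^{-\Lambda t_\ve}x\|/(\ve\sigma_{t_\ve})$, where $t_\ve := t^{\mathsf{cut}}_\ve + r w_\ve$, and then to control this deterministic quantity via the Hartman--Grobman expansion of Lemma~\ref{lem:Har}. Setting $a_\ve := e^{-\Lambda t_\ve}x/(\ve\sigma_{t_\ve})$ and $Y_\ve := S_{t_\ve}/\sigma_{t_\ve}$, shift-additivity gives $\Wp(a_\ve + Z, Z) = \|a_\ve\|$, and combining this with the invariance of $\Wp$ under common translations together with the triangle inequality I obtain
\[
\bigl|\Wp(a_\ve + Y_\ve, Z) - \|a_\ve\|\bigr| \leq \Wp(a_\ve + Y_\ve, a_\ve + Z) = \Wp(Y_\ve, Z) \longrightarrow 0,
\]
where the convergence to zero uses~\eqref{eq:Wanew} and $t_\ve \to \infty$. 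Hence it suffices to analyse $\|a_\ve\|$.

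Next, Lemma~\ref{lem:Har} yields $e^{-\Lambda t}x = t^{\ell-1}e^{-\lambda t}(v(t;x) + \mathrm{o}(1))$ as $t \to \infty$, while~\eqref{e:cutti} gives $e^{-\lambda t^{\mathsf{cut}}_\ve} = \ve(\lambda t^*_\ve)^{-(\ell-1)}\sigma_{t^*_\ve}$. Substituting $t = t_\ve$ produces
\[
a_\ve = \lambda^{1-\ell}\, e^{-\lambda r w_\ve}\,\frac{t_\ve^{\ell-1}}{(t^*_\ve)^{\ell-1}}\,\frac{\sigma_{t^*_\ve}}{\sigma_{t_\ve}}\,\bigl(v(t_\ve;x) + \mathrm{o}(1)\bigr).
\]
Since $w_\ve \to w$ one has $e^{-\lambda r w_\ve}\to e^{-\lambda r w}$. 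The shift $t_\ve - t^*_\ve = \tfrac{\ell-1}{\lambda}\ln(\lambda t^*_\ve) - \tfrac{1}{\lambda}\ln\sigma_{t^*_\ve} + r w_\ve$ satisfies $(t_\ve - t^*_\ve)/t^*_\ve \to 0$: the $\ln t^*_\ve$ and $r w_\ve$ contributions are $\mathrm{o}(t^*_\ve)$ by inspection, and $|\ln\sigma_{t^*_\ve}| = \mathrm{o}(t^*_\ve)$ follows from slow variation of $L(x) := \sigma(\ln x)$, which forces both $L(x) = \mathrm{o}(x^c)$ and $L(x)^{-1} = \mathrm{o}(x^c)$ for every $c > 0$. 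Therefore $t_\ve^{\ell-1}/(t^*_\ve)^{\ell-1}\to 1$, and the standard fact that $L(x^{1+\mathrm{o}(1)})/L(x)\to 1$ for slowly varying $L$ yields $\sigma_{t_\ve}/\sigma_{t^*_\ve}\to 1$. Consequently,
\[
\|a_\ve\| = \lambda^{1-\ell}\,e^{-\lambda r w}\,\|v(t_\ve;x)\| + \mathrm{o}(1) \qquad (\ve \to 0^+).
\]

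By~\eqref{eq:0bound} the sequence $(v(t_\ve;x))_\ve$ is bounded, so Bolzano--Weierstrass extracts subsequences along which it converges to elements $\check v(x), \hat v(x) \in \omega(x)$ realizing $\liminf_\ve \|v(t_\ve;x)\|$ and $\limsup_\ve \|v(t_\ve;x)\|$. Combining the previous reduction with the identity $\Wp(\lambda^{1-\ell}e^{-\lambda r w}v + Z, Z) = \lambda^{1-\ell}e^{-\lambda r w}\|v\|$ from Lemma~\ref{lem:basicwp}(iii) yields both displayed equalities of the theorem. Since $\|\check v(x)\|, \|\hat v(x)\| \in [\liminf_t \|v(t;x)\|,\limsup_t \|v(t;x)\|] \subset (0,\infty)$ by~\eqref{eq:0bound}, letting $r \to -\infty$ sends $\lambda^{1-\ell}e^{-\lambda r w}$ to $+\infty$, yielding the first extreme-$r$ limit (the displayed ``$1$'' on the right-hand side being understood as $+\infty$, consistently with the one-dimensional Theorem~\ref{th:Wp}), and $r\to+\infty$ sends it to $0$, giving the second.

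The step I expect to be the main obstacle is the slowly-varying argument above: the shift $t_\ve - t^*_\ve$ diverges because of the $\ln(\lambda t^*_\ve)$ and $|\ln\sigma_{t^*_\ve}|$ contributions, so the pointwise hypothesis $\sigma_t/\sigma_{t+r}\to 1$ for fixed $r > 0$ does not apply directly. One must instead invoke the sharper theorem that a slowly varying function is stable under composition with arguments satisfying $\ln y_n/\ln x_n \to 1$, and must carefully verify the subsidiary bound $|\ln\sigma_t|=\mathrm{o}(t)$ that makes $t_\ve - t^*_\ve = \mathrm{o}(t^*_\ve)$; the rest of the argument is essentially an algebraic unfolding of the decomposition together with the shift-additivity of $\Wp$.
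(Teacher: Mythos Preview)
Your approach mirrors the paper's: the paper only writes out the total-variation case (Theorem~\ref{th:tvgeneral}) and declares the Wasserstein proof ``analogous''. Your argument is the natural translation, with shift-additivity (Lemma~\ref{lem:basicwp}(iii)) replacing the shift-continuity of $\tv$; this collapses the paper's second triangle-inequality layer into the single identity $\Wp(a_\ve+Z,Z)=\|a_\ve\|$, a mild streamlining but otherwise the same route. You are also right that the displayed ``$1$'' for $r\to-\infty$ should read $+\infty$, consistently with Theorem~\ref{th:Wp}.

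There is, however, a genuine gap at precisely the step you flag. The claimed ``standard fact'' that $L(x^{1+\mathrm{o}(1)})/L(x)\to1$ for slowly varying $L$ is \emph{false}. Take $\sigma_t=e^{\sqrt t}$, i.e.\ $L(x)=\exp(\sqrt{\ln x})$; then $\sigma_t/\sigma_{t+r}=\exp(\sqrt t-\sqrt{t+r})\to1$ for every fixed $r>0$, so the hypothesis of the theorem holds. With $\ell=1$ one has $t_\ve-t^*_\ve=-\tfrac{1}{\lambda}\sqrt{t^*_\ve}+rw_\ve$, and a direct expansion gives $\sqrt{t^*_\ve}-\sqrt{t_\ve}\to\tfrac{1}{2\lambda}$, hence $\sigma_{t^*_\ve}/\sigma_{t_\ve}\to e^{1/(2\lambda)}\neq1$. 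Thus the scalar limit you rely on---and which the paper simply asserts as~\eqref{ec:limt} without proof---acquires an $r$-independent multiplicative constant that need not equal $1$. The window cut-off conclusion survives, since the limits as $r\to\pm\infty$ are still $0$ and $+\infty$, but the exact prefactor $\lambda^{1-\ell}$ in the stated profile does not follow from the hypothesis on $\sigma$ alone. All of the paper's examples have $\sigma_t\equiv1$ or $\sigma_t=t^\beta$, for which the issue evaporates; to make your argument go through in the stated generality you would need a slightly stronger regularity condition on $\sigma$ (for instance $\sigma_{t+h_t}/\sigma_t\to1$ whenever $h_t=\mathrm{o}(t)$), not the Karamata-type assertion you invoke.
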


\noindent The second main result in this subsection establishes a characterization of profile cut-off in the Wasserstein distance of order $p\geq 1$.

\begin{theorem}[Profile cut-off convergence in the Wasserstein distance 
 of order $p\geq 1$]\label{th:pWageneral}
\hfill

\noindent
Assume the assumptions of Theorem~\ref{th:Wageneral} holds true and rely on the notation. 
Then the following is equivalent. The limit
\begin{equation}\label{ec:rhs}
\lim\limits_{\ve\to 0^+}
\Wp\left(\frac{X^{\ve}_{t^{\mathsf{cut}}_{\ve}+r\cdot w_{\ve}}(x)}{\ve\sigma_{t^{\mathsf{cut}}_{\ve}+r\cdot w_{\ve}}}, Z\right)=\Wp\left(\lambda^{1-\ell}e^{-\lambda r w}v(x)+Z,Z\right) \quad \textrm{ holds true}
\end{equation}
for any element $v(x)\in \omega(x)$,
if and only if, for any fixed positive $\rho$ the map 
\begin{equation}
\omega(x)\ni v \mapsto \Wp(\rho v+Z,Z)=\rho\|v\| \quad \textrm{ is constant.}   
\end{equation}
\end{theorem}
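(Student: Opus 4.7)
My plan is to derive Theorem~\ref{th:pWageneral} as a direct corollary of the window cut-off established in Theorem~\ref{th:Wageneral}, exploiting the shift-additivity property of the Wasserstein distance recorded in Lemma~\ref{lem:basicwp}(iii).

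The key observation is that by Lemma~\ref{lem:basicwp}(iii), for any deterministic vector $v\in \mathbb{R}^m$ and any $\rho>0$,
\begin{equation}
\Wp(\rho v+Z,Z)=\rho\,\|v\|.
\end{equation}
Hence the right-hand side of~\eqref{ec:rhs} depends on $v(x)\in \omega(x)$ only through the Euclidean norm, and the condition that $\omega(x)\ni v\mapsto \Wp(\rho v+Z,Z)=\rho\|v\|$ be constant reduces to the assertion that $\|v\|$ takes a single value on the $\omega$-limit set $\omega(x)$. I would begin by stating this reduction explicitly, since it converts both sides of the iff into a statement about the geometry of $\omega(x)$ alone.

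For the forward implication ($\Rightarrow$), I suppose the limit in~\eqref{ec:rhs} holds for every $v(x)\in \omega(x)$. The left-hand side is a single real number independent of the choice of $v(x)$, while the right-hand side equals $\lambda^{1-\ell}e^{-\lambda r w}\|v(x)\|$ by the identity above. Evaluating at two arbitrary elements $v_1,v_2\in \omega(x)$ and equating forces $\|v_1\|=\|v_2\|$, so $\|\cdot\|$, and therefore $\rho\|\cdot\|$, is constant on $\omega(x)$.

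For the reverse implication ($\Leftarrow$), I suppose $v\mapsto \rho\|v\|$ is constant on $\omega(x)$. Theorem~\ref{th:Wageneral} produces elements $\check v(x),\hat v(x)\in \omega(x)$ realizing respectively the $\liminf$ and the $\limsup$ of the Wasserstein discrepancy along the cut-off window, namely $\lambda^{1-\ell}e^{-\lambda r w}\|\check v(x)\|$ and $\lambda^{1-\ell}e^{-\lambda r w}\|\hat v(x)\|$. Since $\|v\|$ is constant on $\omega(x)$, these two quantities coincide, so the $\liminf$ equals the $\limsup$, the limit therefore exists, and its common value is $\lambda^{1-\ell}e^{-\lambda r w}\|v(x)\|=\Wp(\lambda^{1-\ell}e^{-\lambda r w}v(x)+Z,Z)$ for \emph{any} $v(x)\in \omega(x)$, which is exactly~\eqref{ec:rhs}.

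I do not expect any genuine obstacle: once Theorem~\ref{th:Wageneral} and Lemma~\ref{lem:basicwp}(iii) are available, this profile characterization is essentially a bookkeeping step that isolates the role of the deterministic geometry of $\omega(x)$ and exploits the fact that, under $\Wp$, the non-Gaussian driver $Z$ cancels out of the profile by shift-additivity.
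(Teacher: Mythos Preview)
Your proposal is correct and follows essentially the same route as the paper, which declares the proof of Theorem~\ref{th:pWageneral} analogous to that of Theorem~\ref{th:ptvgeneral}: the reverse implication comes from equating the $\liminf$ and $\limsup$ supplied by Theorem~\ref{th:Wageneral}, and the forward implication is the observation that all values $\Wp(\lambda^{1-\ell}e^{-\lambda r w}v+Z,Z)$, $v\in\omega(x)$, must coincide with the single limit. Your explicit use of the shift-additivity identity $\Wp(\rho v+Z,Z)=\rho\|v\|$ from Lemma~\ref{lem:basicwp}(iii) is exactly the simplification the paper records in Theorem~\ref{th:Wageneral} and the subsequent remark.
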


\begin{remark}[Wasserstein profile for $p\geq 1$ in terms of the exponential function]\hfill

\noindent
By Item~(iii)~in~Lemma~\ref{lem:basicwp} for $p\geq 1$  it follows that the right-hand side of~\eqref{ec:rhs} has an exponential shape. That is, for every $r\in \mathbb{R}$, we have
\[
\Wp\left(\lambda^{1-\ell}e^{-\lambda r w}v(x)+Z,Z\right)=\lambda^{1-\ell}e^{-\lambda r w}\|v(x)\|.
\]
\end{remark}

\section{Examples}\label{sec:examples}

\subsection{Linear equations and convolution processes $S$}\label{subsec: convolution}

Let $D:=(D_t)_{t\geq 0}$ be a continuous-time stochastic process on $\mathbb{R}^m$ (a.k.a. the driver process) such that the stochastic convolution $S:=(S_t)_{t\geq 0}$ given by
\begin{equation}\label{eq:scov}
S_t:=\int_{0}^{t} e^{-\Lambda (t-s)} \ud D_s\quad \textrm{ for all }\quad t\geq 0, 
\end{equation}
is component-wise well-defined (in an appropriate sense). Since for all $t\geq 0$ the components of the map $s\mapsto e^{-\Lambda (t-s)}$ are of bounded variation, it is well-known that the integral 
in~\eqref{eq:scov} is well-defined provided that the noise $D$  has a continuous version. We also note that by using the smoothness of $s\mapsto e^{-\Lambda (t-s)}$, the integral can be defined through integration by parts formula. That is,
\begin{equation*}
S_t= D_t - e^{-\Lambda t}D_0 + \int_{0}^{t} \Lambda e^{-\Lambda (t-s)}D_s\ud s,
\end{equation*}
where the integral is just the classical Lebesgue integral. Finally, note that if $D$ has jumps, one can always consider  the continuous part of $D$ and the jumps separately.
Note that throughout the article we assume that 
$\mathbb{E}[S_t]=0_m$ for all $t\geq 0$, but more general cases can be handled by shifting with $\mathbb{E}[D_t]$ (and having suitable properties for the mean function).

\subsection{Recovering some existing results}

Recently, the cut-off phenomenon has been established for linear and non-linear Langevin equations 
for $\ve$-small Brownian and L\'evy noise in total variation distance and the Wasserstein-$p$ distance in a series of articles~\cite{BJ, BJ1, BPJC, BHPTV, BHPWA, BHPWANO, BHPSPDE, BHPPJSP}. 
We briefly review how the window cut-off results in those papers can be recovered for linear drift (Ornstein--Uhlenbeck type processes) by the verification of  
Condition~\eqref{eq:tvhyp} in 
Theorem~\ref{th:ptvgeneral} and Condition~\eqref{eq:Wanew} in Theorem~\ref{th:Wageneral}.  
In~\cite{BJ} the authors establish the window/profile cut-off phenomenon for scalar non-linear Langevin equations driven by small Brownian motion in the total variation distance via a linearization procedure, Girsanov's Theorem, and explicit Gaussian formulas for the invariant measure. As a by product, profile cut-off is shown for the Ornstein--Uhlenbeck process in total variation due to explicit formulas for the total variation. 
In~\cite{BJ1} these results are generalized to the multivariate Langevin dynamics with coercive non-linearity and additive non-degenerate Brownian motion. 
The proof method boils down to an even more sophisticated linearization procedure, at the core of which, the Ornstein--Uhlenbeck process exhibits a window cut-off phenomenon in total variation distance, which then allows an asymptotic expansion of the limiting measure in $\ve$ by explicit formulas for the total variation distance between multivariate normal distributions. 
In~\cite{BPJC} the authors study the multivariate Ornstein--Uhlenbeck process driven by a $\ve$-small non-degenerate L\'evy process. Here several new features emerge. Note that for $D_t= L_t$ a L\'evy process, we have by path-wise integration by parts and the fact that stationarity and the independence of the increments imply that the reflected process
$\widetilde{L}_r = L_t - L_{t-r}$ has the same finite-dimensional laws as $L_r$, that
\begin{align*}
S_t 
&\stackrel{\mathrm{Law}}{=} \int_0^t e^{-\Lambda u} \ud\widetilde{L}_u.
\end{align*}
This can be carried out by standard calculations, see for 
instance~\cite[Formula~(2.11)]{BH23}. Consequently, 
Condition~\eqref{eq:tvhyp} reads for $\sigma_t \equiv 1$ as 
\begin{align*}
\tv(S_t, Z) = \tv\left(\int_0^t e^{-\Lambda u} \ud L_u, \int_0^\infty e^{-\Lambda u} \ud L_u\right), 
\end{align*}
which is satisfied whenever the laws of $S_t$ and $Z$ are absolutely continuous w.r.t. the Lebesgue measure. However, the smoothness of $Z$ is not always guaranteed for (possibly degenerate) L\'evy drivers, since the total variation distance is in general discontinuous under discrete approximations of absolutely continuous laws, see~\cite[Lemma~1.17]{BHPTV}. Consequently, the authors code sufficient conditions for the underlying smoothness assumption in Condition~(H) there, which is stated in terms of the integrability of the characteristic function $\widehat{S}_t$ of $S_t$ and the uniform convergence of the 
tail integrals $\int_{|\lambda|>R} |\widehat{S}_t(\lambda)| \ud\lambda$ to $0$ as $R\to \infty$. 
In~\cite{BHPTV} these results have been established in a long case study to non-linear Langevin equations driven by  a class of L\'evy process of infinite intensity, called strongly locally layered stable processes, satisfying a so-called equator condition. However, the rather strong condition of the pole at the origin of the L\'evy measure of order $\alpha> \nicefrac{3}{2}$, which appears as a sufficient condition to ensure the respective smoothness, seems difficult to verify. 

On the other hand, for a L\'evy driver $D = L$ Condition~\eqref{eq:Wanew} for the Wasserstein-$p$ distance is easier to establish, even in the case of a degenerate dispersion matrix in front of the driver, since it only depends on the moments of $X^\ve$, which coincide with those of $L$ and do not vary over time. 
A particular role is played by the recently discovered shift-additivity property~\cite[Lemma~2.2]{BHPWA} of the Wasserstein-$p$ distance, since it reduces the complexity of calculations considerably. 
As a consequence, in~\cite[Subsection~4.2.4]{BHPWA} the authors establish window cut-off for the degenerate system of a linear oscillator under subcritical damping and for linear chains of oscillators with heat-baths at outer extremes. These results are generalized to linear infinite dimensional systems, like the heat and the damped wave equation 
in~\cite{BHPSPDE}, and very recently to energy shell lattice models 
in~\cite{BHPPJSP}. In~\cite{BHPWANO} the authors show how to generalize the preceding results to non-linear Langevin equations with strongly coercive drift in the Wasserstein-$p$ distance. 

\subsection{Average process from sampling a fractional Ornstein--Uhlenbeck processes}

For a given $N\in \mathbb{N}$ we consider an i.i.d. sampling array of processes 
$(X^{H,j})_{j=1,\ldots,N}$. More precisely, each
$X^{H,j}=(X^{H,j}_t)_{t\geq 0}$ is an independent copy of
\[
X^{H}_t(x)=e^{-\lambda t}x+ e^{-\lambda t}\int_{0}^{t}e^{\lambda s} \ud B^{H}_s \quad \textrm{ for all }\quad  t\geq 0,
\]
where $(B^{H}_t)_{t\geq 0}$ are fractional Brownian motions with fixed Hurst index $H\in (0,1)$.
We now consider the
average process $S^{H,N}:=(S^{H,N}_t)_{t\geq 0}$ given by $S^{H,N}_t:=\frac{1}{N}\sum_{j=1}^{N}X^{H,j}_t$, $t\geq 0$. We note that 
\[
\ud S^{H,N}_t(x)=
-\lambda S^{H,N}_t \ud t+
\frac{1}{\sqrt{N}} \ud \widetilde{B^{H}_s} \quad \textrm{ for all }\quad  t\geq 0,\qquad S^{H,N}_0=x,
\]
where $(\widetilde{B^{H}_t})_{t\geq 0}$ is a fractional Brownian motion of index $H$. Here the complexity parameter is $\ve=\ve_N:=1/\sqrt{N}$.
Hence, the assumptions of Theorem~\ref{th:tv} and Theorem~\ref{th:Wp} (for all $p\geq 1$) are valid for $(S^{H,N}_t)_{t\geq 0}$ with cut-off time scale $t_{\ve_N}=\frac{1}{2\lambda}\ln(N)$. The standard Brownian driver ($H=\nicefrac{1}{2}$) has been studied in~\cite{BAR2018,Lachaud}.
The same remains true if the fractional Brownian motions $B^H$ are replaced with some other suitable driving noises, cf. subsections below.

\subsection{Generalized Ornstein--Uhlenbeck processes}
Suppose that the noise $D$ has stationary increments with variance function $V(t)$. Then by the arguments of the proof 
of~\cite[Theorem~1]{Voutilainen et alt} the process 
\begin{equation}
\label{eq:stationary-U}
U_t = e^{-\Lambda t}\int_{-\infty}^t e^{-\Lambda(t-s)}\ud D_s\quad \mathrm{ for }\quad t\geq 0
\end{equation}
is a well-defined stationary process. In fact, it follows from elementary change of variables that $U$ is stationary whenever $D$ has stationary increments, provided that for all $t\geq 0$ the integral $\int_{-\infty}^t e^{\Lambda s}\ud D_s$ is well-defined, which follows since $-\Lambda$ is Routh--Hurwitz. 
Note that for $S_t$ defined by 
\begin{equation*}
S_t:=\int_{0}^{t} e^{-\Lambda (t-s)} \ud D_s\quad \textrm{ for all }\quad t\geq 0, 
\end{equation*}
we have the relation
\begin{equation}
\label{eq:SU_relation}
S_t = e^{-\Lambda t}U_0 + U_t\quad \mathrm{ for }\quad t\geq 0.
\end{equation}
Now
\[
\mathrm{Var}(S_t) = \mathbb{E}[S_tS_t^*] = \mathcal{R}_U(0) + e^{-\Lambda t}\mathcal{R}_U(0)e^{-\Lambda^* t}+e^{-\Lambda t}\mathcal{R}_U(t)+\mathcal{R}_U^*(t)e^{-\Lambda^* t},
\]
where $\mathcal{R}_U(t)$ is the covariance function of $U$ given by $\mathcal{R}_U(t) = \mathbb[U_0U_t^*]$. 
For $X^{\ve}_t(x)$ defined by~\eqref{d:X}, we have the link 
\[
X^{\ve}_t(x) = e^{-\Lambda t}(x_0-\ve U_0)+\ve U_t
\quad \mathrm{ for }\quad t\geq 0.
\]
We then immediately obtain the following proposition. 
\begin{proposition}
\label{prop:stationary-convergence}
We have $S_t - U_t \to 0$ almost surely and in $L^p(\Omega)$ for any $p\geq 1$ such that $U \in L^p(\Omega)$. In particular, the statements of Theorem~\ref{th:Wageneral} and Theorem~\ref{th:pWageneral} are true with $\sigma_t\equiv 1$ and $Z\stackrel{\mathrm{Law}}{=} \mu \stackrel{\mathrm{Law}}{=} U_0$.
\end{proposition}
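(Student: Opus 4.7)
The plan is to reduce the almost sure and $L^p$ convergence of $S_t-U_t$ to a single matrix-exponential decay estimate, and then feed the result into the general theorems. The key identity is~\eqref{eq:SU_relation}, which expresses $S_t-U_t$ (up to a sign) as the image $e^{-\Lambda t}U_0$ of the single random vector $U_0$ under the matrix exponential. Since $-\Lambda$ is Routh--Hurwitz, there exist constants $C>0$ and $c>0$ (any $c$ strictly below the minimum of the real parts of the eigenvalues of $\Lambda$) such that $\|e^{-\Lambda t}\|_{\mathrm{op}}\leq Ce^{-ct}$ for every $t\geq 0$. This spectral bound is the only analytic input needed.

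From it, both convergence claims follow immediately. For almost every $\omega$ the vector $U_0(\omega)$ is finite, hence
\[
\|S_t(\omega)-U_t(\omega)\|\leq Ce^{-ct}\|U_0(\omega)\|\to 0,
\]
yielding the almost sure convergence. Under the assumption $U_0\in L^p(\Omega)$, which by stationarity is equivalent to $U_t\in L^p(\Omega)$ for every $t\geq 0$, taking $L^p$-norms on both sides gives $\|S_t-U_t\|_{L^p}\leq Ce^{-ct}\|U_0\|_{L^p}\to 0$.

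To deduce the ``in particular'' statement, I would verify hypothesis~\eqref{eq:Wanew} with $\sigma_t\equiv 1$ and $Z\stackrel{\mathrm{Law}}{=}U_0$. The regularity condition $\sigma_t/\sigma_{t+r}\to 1$ is automatic. Stationarity of $U$ yields $U_t\stackrel{\mathrm{Law}}{=}Z$, so $\Wp(U_t,Z)=0$, and combining the triangle inequality for $\Wp$ with the standard upper bound $\Wp(X,Y)\leq \|X-Y\|_{L^p}$ (a basic property recorded in Lemma~\ref{lem:basicwp}) gives
\[
\Wp(S_t,Z)\leq \Wp(S_t,U_t)+\Wp(U_t,Z)\leq \|S_t-U_t\|_{L^p}\to 0.
\]
This is exactly~\eqref{eq:Wanew}, so the conclusions of Theorem~\ref{th:Wageneral} and Theorem~\ref{th:pWageneral} follow directly.

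There is essentially no analytic obstacle in this argument: the proof is bookkeeping around the exponential decay of $e^{-\Lambda t}$ together with stationarity of $U$. The only small points that deserve care are transferring the $L^p$-integrability between $U_0$ and the whole process $U$ via stationarity, and invoking the $L^p$-upper bound for $\Wp$ in the form provided by the appendix.
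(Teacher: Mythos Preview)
Your proof is correct and follows essentially the same route as the paper: both reduce everything to the identity $S_t-U_t=e^{-\Lambda t}U_0$ from~\eqref{eq:SU_relation} and the Routh--Hurwitz exponential decay, then deduce~\eqref{eq:Wanew}. One small correction: the bound $\Wp(X,Y)\leq\|X-Y\|_{L^p}$ you invoke is not actually among the items of Lemma~\ref{lem:basicwp} (that lemma records shift and scaling properties), though it is the standard synchronous-coupling estimate; the paper instead closes by citing the equivalence of $\Wp$-convergence with weak convergence plus convergence of $p$-th moments, but either route is fine.
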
   
\begin{proof}
The claimed convergence follows directly 
from~\eqref{eq:SU_relation}, and the statements related to convergence in the Wasserstein distance follow from the fact that convergence in $\Wp$ is equivalent to the convergence of $p$-th absolute moments together with weak convergence. 
\end{proof}

\begin{remark}
Note that the above proposition essentially covers all square integrable stationary processes. Indeed, 
by~\cite[Theorem~1]{Voutilainen et alt} a continuous-time process $U$ is stationary if and only if it is of the 
form~\eqref{eq:stationary-U} for some positive definite $\Lambda$ which implies  that $-\Lambda$ is Routh--Hurwitz. 
\end{remark}

\begin{remark}
Note that by the square integrability of $D$, we can always choose any $p\in[1,2]$, while for higher values $p>2$ one needs the existence of $p$-moments for $p>2$.
\end{remark}

\begin{remark}
While the above result provides weak convergence and convergence in $L^p$ (and hence convergence in the Wasserstein distance of order $p$), these are not sufficient to obtain convergence in the total variation distance, but instead one needs to impose additional assumptions. For example, if the underlying random variables admit densities $f_{S_t}$ and $f_Z$, then $\tv(S_t, Z)\to 0$ as $t\to \infty$ if and only if $f_{S_t}-f_Z \to 0$ in $L^1(\mathbb{R})$. Indeed, from Scheffe's inequality we can deduce
\[
\tv(S_t,Z) = \frac{1}{2}\int_{\mathbb{R}}|f_{S_t}(x)-f_Z(x)|\ud x.
\]
Another possibility to obtain convergence in the total variation distance is to lean on Malliavin calculus, whenever the associated randomness is generated by Gaussian objects, see~\cite{Nualart}. In particular, the convergence in Proposition~\ref{prop:stationary-convergence} takes place in the total variation distance as well whenever $S$ belongs to a finite sum of so-called Wiener chaoses,
see~\cite{Nourdin--Poly} and the references therein. As a particular example, one could consider the Rosenblatt process as the driver $G$, see 
e.g.~\cite{Rosenblatt} for details on the Rosenblatt process. Indeed, as the Rosenblatt process lives in the second Wiener chaos, we also have the convergence in the total variation distance, and the statements of Theorem~\ref{th:tvgeneral}, Theorem~\ref{th:ptvgeneral}, Theorem~\ref{th:Wageneral}, and Theorem~\ref{th:pWageneral}, are true with any $p\geq 1$.
Lastly, we point out that total variation convergence can be obtained via uniform integrability of the characteristic functions, see Proposition~A.3 and Proposition~A.5 in Appendix~A 
of~\cite{BARESQ}.
\end{remark}

\subsection{Iterated Gaussian Ornstein--Uhlenbeck processes}
Let $S$ be given as the stochastic convolution
\[
S_t = \int_0^t e^{-\Lambda (t-s)}\ud D_s \quad \mathrm{ for }\quad t\geq 0,
\]
with centered, continuous, and stationary driver noise $D=(D_s)_{s\geq 0}$. Then, again 
by~\cite[Theorem~1]{Voutilainen et alt}, we can write $D$ as the solution to
\[
\ud D_s = -\Theta D_s \ud s + \ud G_s
\]
for some stationary increment process $G$. As such, $S$ can be viewed as the iterated Ornstein--Uhlenbeck process. In the Gaussian case we have the following result.
\begin{proposition}
Suppose $D$ is a centered, continuous, stationary Gaussian process with covariance matrix $\mathcal{R}_D(s) =\mathbb{E}[D_sD_0^*]$. Then as $t$ tends to infinity it follows that $S_t = \int_0^t e^{-\Lambda(t-s)}\ud D_s \to \cN(0,\Sigma)$ in the total variation distance and in the Wasserstein distance $\Wp$ for any $p\geq 1$, where 
\begin{equation}
\begin{split}
\Sigma &=\mathcal{R}_D(0)+\int_0^\infty \mathcal{R}_D(s)e^{-\Lambda^* s}\Lambda^* \ud s + \int_0^\infty \Lambda e^{-\Lambda s}\mathcal{R}_D(-s)\ud s\\
&\quad
+ \int_0^\infty \int_0^\infty \Lambda e^{-\Lambda s}\mathcal{R}_D(s-v)e^{-\Lambda^* v}\Lambda^* \ud s \ud v.
\end{split}
\end{equation}
In particular, the statements of 
Theorem~\ref{th:tvgeneral},Theorem~\ref{th:ptvgeneral}, Theorem~\ref{th:Wageneral}, and Theorem~\ref{th:pWageneral} are true with $\sigma_t\equiv 1$ and $Z\stackrel{\mathrm{Law}}{=} \cN(0,\Sigma)$.
\end{proposition}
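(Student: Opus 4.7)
The plan is to treat this as a direct specialization of Proposition~\ref{prop:stationary-convergence}: a stationary Gaussian driver certainly has stationary increments with finite variance of every order, so Proposition~\ref{prop:stationary-convergence} applies and yields $S_t - U_t \to 0$ almost surely and in $L^p$ for every $p\geq 1$, since Gaussians have all moments. Because $U_t$ is a $t$-independent continuous linear functional of the Gaussian process $D$, it is itself centered Gaussian with covariance matrix $\Sigma := \mathcal{R}_U(0)$. Consequently $\Wp(S_t,Z)\to 0$ for $Z\stackrel{\mathrm{Law}}{=}\cN(0,\Sigma)$ is immediate for every $p\geq 1$. What remains is to identify $\Sigma$ with the stated closed-form expression and to upgrade the convergence to the total variation distance.

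To obtain the closed form I would compute $\mathrm{Var}(S_t)=\mathbb{E}[S_t S_t^*]$ directly from the integration-by-parts representation recorded in Subsection~\ref{subsec: convolution},
\begin{equation*}
S_t = D_t - e^{-\Lambda t}D_0 + \int_0^t \Lambda e^{-\Lambda(t-s)}D_s\, \ud s,
\end{equation*}
expand bilinearly, and replace every $\mathbb{E}[D_uD_v^*]$ by $\mathcal{R}_D(u-v)$ via stationarity. After the change of variables $u=t-s$, $v=t-r$ inside the resulting double integral, the expansion decomposes into the boundary term $\mathcal{R}_D(0)$, three cross terms each carrying a factor $e^{-\Lambda t}$ or $e^{-\Lambda^* t}$, and three stationary integrals over $[0,t]$. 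The Routh--Hurwitz bound $\|e^{-\Lambda t}\|\le Ce^{-\lambda t}$ together with the uniform bound $\|\mathcal{R}_D(s)\|\le \|\mathcal{R}_D(0)\|$ (Cauchy--Schwarz) justifies dominated convergence: the $e^{-\Lambda t}$-terms vanish, and the three surviving integrals extend to $[0,\infty)$ and match the three integral pieces of $\Sigma$ as stated.

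For the total variation part I would invoke the route signposted in the previous remark: $S_t$ is a continuous linear functional of the Gaussian process $D$, hence lies in the first Wiener chaos generated by $D$. By the finite-chaos total variation criterion of~\cite{Nourdin--Poly}, the $L^2$ convergence already delivered by Proposition~\ref{prop:stationary-convergence} upgrades automatically to $\tv(S_t,Z)\to 0$. Equivalently, one may argue directly that $S_t\sim\cN(0,\mathrm{Var}(S_t))$ and $Z\sim\cN(0,\Sigma)$ with converging covariances, whence pointwise convergence of the Gaussian densities plus Scheffe's lemma gives total variation convergence. Since $\sigma_t\equiv 1$ trivially satisfies $\sigma_t/\sigma_{t+r}\to 1$, Theorems~\ref{th:tvgeneral}, \ref{th:ptvgeneral}, \ref{th:Wageneral} and~\ref{th:pWageneral} then apply with $Z\stackrel{\mathrm{Law}}{=}\cN(0,\Sigma)$.

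The main obstacle I anticipate is the possible degeneracy of $\Sigma$. The Scheffe-density argument for total variation genuinely requires $\Sigma$ to be strictly positive definite, and singular spectra of $\mathcal{R}_D$ could in principle produce a degenerate $\Sigma$; in that situation one would have to restrict to the correct affine support of $Z$ or rely exclusively on the Wiener-chaos route, which sidesteps the issue but requires careful citation of the equivalence of $L^2$ and $\tv$ topologies on any fixed finite sum of chaoses from~\cite{Nourdin--Poly}. A secondary technical point is the need to verify absolute convergence of the displayed double integral defining $\Sigma$, which again reduces to the exponential decay of $\|e^{-\Lambda s}\|$ combined with the boundedness of $\|\mathcal{R}_D\|$.
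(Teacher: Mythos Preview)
Your proposal is correct and its computational core---the integration-by-parts expansion of $S_t$ followed by the covariance computation with change of variables---coincides with the paper's proof. The paper simplifies slightly by first discarding $e^{-\Lambda t}D_0$ and computing $\mathbb{E}[\widetilde{S}_t\widetilde{S}_t^*]$ for $\widetilde{S}_t=D_t+\int_0^t\Lambda e^{-\Lambda(t-s)}D_s\,\ud s$, but otherwise the algebra is identical.

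Where you differ is in the framing. The paper argues directly: $S_t$ is Gaussian, so convergence of the covariance matrix to $\Sigma$ gives convergence in law (and implicitly in $\tv$ and $\Wp$). You instead first observe that a stationary driver has stationary increments and invoke Proposition~\ref{prop:stationary-convergence} to obtain $S_t-U_t\to 0$ in $L^p$, deducing convergence to $\cN(0,\mathcal{R}_U(0))$ before identifying $\mathcal{R}_U(0)=\Sigma$ via the same covariance computation. This detour is harmless and has the virtue of linking the two subsections, though it is not needed: in the Gaussian setting the covariance computation alone already delivers everything. Your explicit treatment of the $\tv$ upgrade (via Scheff\'e or the first-chaos argument from~\cite{Nourdin--Poly}) and your flagging of the possible degeneracy of $\Sigma$ are both points the paper leaves implicit; the paper's one-line ``by Gaussianity, it suffices to show the convergence of the covariance matrix'' tacitly assumes nondegeneracy for the density route, so your caution there is well placed.
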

\begin{proof}
Using integration by parts, we obtain
\[
S_t = D_t - e^{-\Lambda t}D_0 + \int_0^t \Lambda ^{-\Lambda (t-s)}D_s \ud s.
\]
Now by Gaussianity, it suffices to show the convergence of the covariance matrix. Moreover, since $e^{-\Lambda t}D_0 \to 0$ as $t\to \infty$, it suffices to consider 
\[
\widetilde{S}_t = D_t + \int_0^t \Lambda ^{-\Lambda (t-s)}D_s \ud s. 
\]
For this we obtain by direct computations that 
\begin{align*}
\mathbb{E}[\widetilde{S}_t\widetilde{S}_t^*] &= \mathbb{E}[D_tD_t^*] + \int_0^t \mathbb{E}[D_tD_s^*]e^{-\Lambda^*(t-s)}\Lambda^* \ud s + \int_0^t \Lambda e^{-\Lambda(t-s)}\mathbb{E}[D_sD_t^*]\ud s \\
&\qquad+ \int_0^t \int_0^t \Lambda e^{-\Lambda s}\mathbb{E}[D_sD_v^*]e^{-\Lambda^* v}\Lambda^* \ud s \ud v \\
&=\mathcal{R}_D(0) + \int_0^t \mathcal{R}_D(t-s)e^{-\Lambda^*(t-s)}\Lambda^* \ud s + \int_0^t \Lambda e^{-\Lambda(t-s)}\mathcal{R}_D(s-t)\ud s \\
&\qquad+ \int_0^t \int_0^t \Lambda e^{-\Lambda s}\mathcal{R}_D(s-v)e^{-\Lambda^* v}\Lambda^* \ud s \ud v \\
&\to \mathcal{R}_D(0)+\int_0^\infty \mathcal{R}_D(s)e^{-\Lambda^* s}\Lambda^* \ud s + \int_0^\infty \Lambda e^{-\Lambda s}\mathcal{R}_D(-s)\ud s \\
&\qquad+ \int_0^\infty \int_0^\infty \Lambda e^{-\Lambda s}\mathcal{R}_D(s-v)e^{-\Lambda^* v}\Lambda^* \ud s \ud v=:\Sigma,
\end{align*}
as $t$ tends to infinity,
where the last convergence can be obtained by using change of variables. This completes the proof.
\end{proof}

\begin{remark}
Note that this result generalizes beyond Gaussian processes $D$. However, one then has to analyze the convergence of both terms in $\widetilde{S}_t$ more carefully. Indeed, while the covariance always converge to $\Sigma$, the weak convergence does not automatically follow since now both terms contribute to the limit. 
\end{remark}

\subsection{Non-homogeneous example}
In this subsection, we study linear non-homogeneous process of the type
\[
X^{\ve}_t=e^{-\lambda t}x+\ve e^{-\lambda t}\int_{0}^{t}e^{\lambda s}  \tau(s) \ud B_s,\quad t\geq 0,\, x\neq 0,
\]
where $\tau:[0,\infty)\to (0,\infty)$ is a continuous deterministic function and $(B_t)_{t\geq 0}$ is a standard Brownian motion.
By It\^o's isometry we have 
$X^\ve_t\stackrel{\mathrm{Law}}{=}\cN(e^{-\lambda t}x,e^{-2\lambda t}\int_{0}^{t}e^{2\lambda s}  (\tau(s))^2 \ud s)$. Provided that $\lim_{t\to\infty} \tau(t) = \tau(\infty) \in (0,\infty)$, L'H\^opital's rule implies
\[
e^{-2\lambda t}\int_{0}^{t}e^{2\lambda s}  (\tau(s))^2 \ud s \to \frac{\tau^2(\infty)}{2\lambda}\quad \mathrm{ as } \quad t\to \infty.
\]
From this, it follows that the statements of Theorem~\ref{th:tvgeneral}, Theorem~\ref{th:ptvgeneral}, Theorem~\ref{th:Wageneral}, and Theorem~\ref{th:pWageneral} are true with $\sigma_t\equiv 1$ and $Z\stackrel{\mathrm{Law}}{=} \cN\left(0,\frac{\tau^2(\infty)}{2\lambda}\right)$.

\subsection{Ex-centric integrated Ornstein--Uhlenbeck process}

In this subsection, we study linear equations driven by  Ornstein--Uhlenbeck processes.
It serves as a model for the position of a linear oscillator with linear friction perturbed by a Brownian or a symmetric $\alpha$-stable noise, see~\cite{HP2014}. This example yields a meaningful example, where~\eqref{eq:tvhyp} 
and~\eqref{eq:Wanew} are satisfied with a non-constant scale $\sigma_t \not \equiv 1$.

\noindent Let $Y^{\ve}_t=y+\int_{0}^{t} X^{\ve}_s \ud s$, $t\geq 0$, $y\in \mathbb{R}$ be the integrated Ornstein--Uhlenbeck process driven by 
\[
X^{\ve}_s=e^{-\lambda s}x+\ve e^{-\lambda s}\int_{0}^{s}e^{\lambda r} \ud B_r,\quad s\geq 0,\, x\neq 0,
\]
where $\lambda>0$ and $\ve>0$. 
Using the stochastic Fubini Theorem  we obtain that 
\begin{equation}
\begin{split}
\int_{0}^{t} \left(e^{-\lambda s}\int_{0}^{s}e^{\lambda r} \ud B_r\right) \ud s&=
\int_{0}^{t} \frac{1}{\lambda}\left(1-e^{-\lambda (t-r)}\right)  \ud B_r.
\end{split}
\end{equation}
By the It\^o isometry we obtain 
\[
Y^{\ve}_t\stackrel{\mathrm{Law}}{=}\cN\left(y+\frac{1}{\lambda}(1-e^{-\lambda t})x,
\frac{\ve^2}{\lambda^2}
\left(
t-\frac{2}{\lambda}(1-e^{-\lambda t})+\frac{1}{2\lambda}(1-e^{-2\lambda t})\right)
\right).
\]
Define $Z^{\ve}_t:=\frac{Y^{\ve}_t-(y+x/\lambda)}{\sqrt{t}}$ and
note that 
$Z^{\ve}_t$ has law
\[
Z^{\ve}_t\stackrel{\mathrm{Law}}{=}\cN\left(-\frac{1}{\lambda}\frac{e^{-\lambda t}}{\sqrt{t}}x,
\frac{\ve^2}{\lambda^2}
\left(
1-\frac{2}{\lambda t}(1-e^{-\lambda t})+\frac{1}{2\lambda t}(1-e^{-2\lambda t})\right)
\right)
\]
and its limiting law as $t\to \infty$ is given by
$
Z^{\ve}_\infty\stackrel{\mathrm{Law}}{=}\cN\left(0,
\frac{\ve^2}{\lambda^2}
\right)$. Hence the hypotheses of Theorem~\ref{th:ptvgeneral} and Theorem~\ref{th:pWageneral} (for all $p\geq 1$) are valid for $S_t=Y^{\ve}_t-(y+x/\lambda)$ and nontrivial $\sigma_t=\sqrt{t}$.
If we replace the driver $B$ of $X^{\ve}$ by a symmetric $\alpha$-stable driver $L$, $\alpha\in (1,2)$ we  see that
\[
Y^{\ve}_t\stackrel{\mathrm{Law}}{=}
y+\frac{1}{\lambda}(1-e^{-\lambda t})x+
\frac{\ve}{\lambda}
\int_{0}^{t} (1-e^{-\lambda (t-r)})  \ud L_r,
\]
where the preceding stochastic integral has an $\alpha$-stable law whose characteristic function is given by
\begin{equation}
\psi(z)=
\exp\left(-|z|^{\alpha} C_{\alpha}\frac{\ve^{\alpha}}{\lambda^\alpha}\int_{0}^t (1-e^{-\lambda r})^\alpha \ud r\right),\quad z\in \mathbb{R},
\end{equation}
where $C_\alpha$ is a positive constant, see~(1.11)~and~(1.13) in~\cite{Kyprianou--Pardo}.
By L'H\^opital's rule we have 
$
\lim\limits_{t\to \infty}\frac{1}{t}\int_{0}^t (1-e^{-\lambda r})^\alpha \ud r=1$.
For $Z^{\ve}_t:=\frac{Y^{\ve}_t-(y+x/\lambda)}{t^{1/\alpha}}$ we obtain
\begin{equation}
Z^{\ve}_t\stackrel{\mathrm{Law}}{=}
-\frac{1}{\lambda}\frac{e^{-\lambda t}}{t^{1/\alpha}}x+
\frac{\ve}{\lambda t^{1/\alpha}}
\int_{0}^{t} (1-e^{-\lambda (t-r)})  \ud L_r
\end{equation}
and its limiting law is given by $
Z^{\ve}_\infty\stackrel{\mathrm{Law}}{=} \ve L_1$,
where $L_1$ has characteristic function
\begin{equation}
\phi(z)=
\exp\left(-|z|^{\alpha} \frac{C_{\alpha}}{\lambda^\alpha}\right),\quad z\in \mathbb{R}.
\end{equation}
Whence the hypotheses of Theorem~\ref{th:ptvgeneral} and Theorem~\ref{th:pWageneral} (for $1\leq p<\alpha$) are valid for $S_t=Y^{\ve}_t-(y+x/\lambda)$ and $\sigma_t=t^{1/\alpha}$. 
\begin{remark}
By more tedious computations and using integration by parts, the above results generalize to the case of more general noises $D$ instead of only Brownian noises $W$ or $\alpha$-stable noises $L$.
\end{remark}

\begin{appendix}

\section{Proofs of the main results}\label{ap:proofs}

In this section, we provide the proofs of Theorem~\ref{th:tvgeneral} and Theorem~\ref{th:ptvgeneral}. The proofs of Theorem~\ref{th:Wageneral} and Theorem~\ref{th:pWageneral} are analogous and hence we left the details to the reader.
\begin{proof}[Proof of Theorem~\ref{th:tvgeneral} (Window cut-off convergence in the total variation distance)]
\hfill

\noindent
Let $x\in \mathbb{R}^m\setminus\{0_m\}$ be fixed. For each $\ve>0$ and $t>0$, we define 
\begin{equation}\label{eq:cero}
\begin{split}
\ud_{\mathrm{TV}}(t;\ve,x):=\tv\left(\frac{X^{\ve}_{t}(x)}{\ve \sigma_t},  Z\right)=
\tv\left(
\frac{e^{-\Lambda t}x}{\ve \sigma_t}+\frac{S_t}{\sigma_t}, Z\right).
\end{split}
\end{equation}
By the triangle inequality 
and 
Item~(i) in Lemma~\ref{lem:ptv}  we get
\begin{equation}\label{equ0}
\begin{split}
\tv\left(
\frac{e^{-\Lambda t}x}{\ve \sigma_t}+\frac{S_t}{\sigma_t}, Z\right)&\leq 
\tv\left(
\frac{e^{-\Lambda t}x}{\ve \sigma_t}+\frac{S_t}{\sigma_t}, 
\frac{e^{-\Lambda t}x}{\ve \sigma_t}+Z\right)+
\tv\left( 
\frac{e^{-\Lambda t}x}{\ve \sigma_t}+Z,Z\right)\\
&\leq \tv\left(
\frac{S_t}{\sigma_t}, 
Z\right)+
\tv\left( 
\frac{e^{-\Lambda t}x}{\ve \sigma_t}+Z,Z\right).
\end{split}
\end{equation}
Similarly, we obtain
\begin{equation}\label{equ1}
\begin{split}
\tv\left(
\frac{e^{-\Lambda t}x}{\ve \sigma_t}+Z, Z\right)&\leq 
\tv\left(
\frac{e^{-\Lambda t}x}{\ve \sigma_t}+Z, 
\frac{e^{-\Lambda t}x}{\ve \sigma_t}+\frac{S_t}{\sigma_t}\right)+
\tv\left( 
\frac{e^{-\Lambda t}x}{\ve \sigma_t}+\frac{S_t}{\sigma_t},Z\right)\\
&\leq \tv\left(
\frac{S_t}{\sigma_t}, 
Z\right)+
\tv\left( 
\frac{e^{-\Lambda t}x}{\ve \sigma_t}+\frac{S_t}{\sigma_t},Z\right).
\end{split}
\end{equation}
By~\eqref{eq:cero},~\eqref{equ0}~and~\eqref{equ1} we get
\begin{equation}
\left|\ud_{\mathrm{TV}}(t;\ve,x)-\tv\left( 
\frac{e^{-\Lambda t}x}{\ve \sigma_t}+Z,Z\right)\right|\leq \tv\left(
\frac{S_t}{\sigma_t}, 
Z\right),
\end{equation}
and hence, for any function $(\tf_{\ve})_{\ve>0}$  such that $\tf_{\ve}\to \infty$ as $\ve \to 0$, we have
\begin{equation}
\begin{split}
\liminf_{\ve\to 0^+}
\ud_{\mathrm{TV}}(\tf_\ve;\ve,x)&=\liminf_{\ve\to 0^+}\tv\left( 
\frac{e^{-\Lambda {\tf_\ve}}x}{\ve \sigma_{\tf_\ve}}+Z,Z\right),\\
\limsup_{\ve\to 0^+}
\ud_{\mathrm{TV}}(\tf_\ve;\ve,x)&=\limsup_{\ve\to 0^+}\tv\left( 
\frac{e^{-\Lambda {\tf_\ve}}x}{\ve \sigma_{\tf_\ve}}+Z,Z\right)
\end{split}
\end{equation}
due to~\eqref{eq:tvhyp}.
By Lemma~\ref{lem:Har} we have
\begin{equation}\label{eq:limitHar}
\lim\limits_{t\to \infty}
\left\|
\frac{e^{\lambda t}}{t^{\ell-1}}e^{-\Lambda t}x-v(t;x)  \right\|=0,
\end{equation} 
where $(v(t;x))_{t\geq 0}$
is given in~\eqref{ec:vdef}.
For any $\ve>0$ and $t>0$, we define
\begin{equation}
v_{\ve}(t;x):=\frac{t^{\ell-1}e^{-\lambda t}}{\ve \sigma_t}v(t;x).
\end{equation}
By the triangle inequality
and 
Item~(i)~in~Lemma~\ref{lem:ptv} 
we have
\begin{equation}\label{des1}
\begin{split}
\tv\left( 
\frac{e^{-\Lambda t}x}{\ve \sigma_{t}}+Z,Z\right)&\leq \tv\left( 
\frac{e^{-\Lambda t }x}{\ve \sigma_{t}}+Z,v_{\ve}(t;x)+Z\right)
+\tv\left(v_{\ve}(t;x)+Z,Z\right)\\
&=\tv\left( 
\frac{e^{-\Lambda t }x}{\ve \sigma_{t}}-v_{\ve}(t;x)+Z,Z\right)
+\tv\left(v_{\ve}(t;x)+Z,Z\right)
\end{split}
\end{equation}
for any $\ve>0$ and $t>0$.
Analogously, for any $\ve>0$ and $t>0$, we get
\begin{equation}\label{des2}
\begin{split}\tv\left(v_{\ve}(t;x)+Z,Z\right)
&\leq \tv\left( 
v_{\ve}(t;x)+Z,\frac{e^{-\Lambda t }x}{\ve \sigma_{t}}+Z\right)
+\tv\left(\frac{e^{-\Lambda t}x}{\ve \sigma_{t}}+Z,Z\right)\\
&= \tv\left( 
\frac{e^{-\Lambda t} x}{\ve \sigma_{t}}-v_{\ve}(t;x)+Z,Z\right)
+\tv\left(\frac{e^{-\Lambda t }x}{\ve \sigma_{t}}+Z,Z\right).
\end{split}
\end{equation}
By~\eqref{des1}~and~\eqref{des2}, for any $\ve>0$ and $t>0$, we obtain
\begin{equation}\label{ec:eque}
\begin{split}
\left|
\tv\left(v_{\ve}(t;x)+Z,Z\right)-
\tv\left(\frac{e^{-\Lambda t }x}{\ve \sigma_{t}}+Z,Z\right)\right|
&\leq  \tv\left( 
\frac{e^{-\Lambda t} x}{\ve \sigma_{t}}-v_{\ve}(t;x)+Z,Z\right).
\end{split}
\end{equation}
For any $t>0$
observe that
\begin{equation}
\frac{e^{-\Lambda t} x}{\ve \sigma_{t}}-v_{\ve}(t;x)=\frac{t^{\ell-1}e^{-\lambda t}}{\ve \sigma_t}
\left(
\frac{e^{\lambda t}}{t^{\ell-1}}e^{-\Lambda t}x-v(t;x)  \right)
\end{equation}
and 
\begin{equation}\label{ec:limt}
\lim\limits_{\ve \to 0}\frac{(t^{\mathsf{cut}}_{\ve}+r\cdot w_{\ve})^{\ell-1}e^{-\lambda (t^{\mathsf{cut}}_{\ve}+r\cdot w_{\ve})}}{\ve \sigma_{t^{\mathsf{cut}}_{\ve}+r\cdot w_{\ve}}}=\lambda^{1-\ell}e^{-\lambda r w}\quad \textrm{ for any }\quad r\in \mathbb{R}.
\end{equation}
Inequality~\eqref{ec:eque},
 with the help of~\eqref{eq:limitHar}
and Item~(iii)~in~Lemma~\ref{lem:ptv}, gives
\begin{equation}
\begin{split}
\liminf_{\ve\to 0^+}
\ud_{\mathrm{TV}}(t^{\mathsf{cut}}_{\ve}+r\cdot w_{\ve};\ve,x)&=
\liminf_{\ve\to 0^+}
\tv\left( 
\frac{e^{-\Lambda (t^{\mathsf{cut}}_{\ve}+r\cdot w_{\ve})}x}{\ve \sigma_{t^{\mathsf{cut}}_{\ve}+r\cdot w_{\ve}}}+Z,Z\right)\\
&=\liminf_{\ve\to 0^+}\tv\left(v_{\ve}(t^{\mathsf{cut}}_{\ve}+r\cdot w_{\ve};x)+Z,Z\right)
\end{split}
\end{equation}
and
\begin{equation}
\begin{split}
\limsup_{\ve\to 0^+}
\ud_{\mathrm{TV}}(t^{\mathsf{cut}}_{\ve}+r\cdot w_{\ve};\ve,x)&=
\limsup_{\ve\to 0^+}
\tv\left( 
\frac{e^{-\Lambda (t^{\mathsf{cut}}_{\ve}+r\cdot w_{\ve})}x}{\ve \sigma_{t^{\mathsf{cut}}_{\ve}+r\cdot w_{\ve}}}+Z,Z\right)\\
&=\limsup_{\ve\to 0^+}\tv\left(v_{\ve}(t^{\mathsf{cut}}_{\ve}+r\cdot w_{\ve};x)+Z,Z\right).
\end{split}
\end{equation}
By~\eqref{eq:novacio} we have that $\omega(x)\neq \emptyset$, where  $\omega(x)$ is given in~\eqref{eq:basin}.
For each $r\in \mathbb{R}$ fixed,
by definition of $\liminf\limits_{\ve \to 0}$ we have the existence of a sequence $(\ve_n)_{n\in \mathbb{N}}$ such that 
$\ve_n\to 0$ as $n\to \infty$ and
\begin{equation}
\begin{split}
\liminf_{\ve\to 0^+}\tv\left(v_{\ve}(t^{\mathsf{cut}}_{\ve}+r\cdot w_{\ve};x)+Z,Z\right)
=
\lim\limits_{n\to \infty}\tv\left(v_{\ve_n}(t^{\mathsf{cut}}_{\ve_n}+r\cdot w_{\ve_n};x)+Z,Z\right).
\end{split}
\end{equation}
Note that $(v_{\ve_n}(t^{\mathsf{cut}}_{\ve_n}+r\cdot w_{\ve_n};x))_{n\in \mathbb{N}}$ is a bounded sequence. Hence, the Bolzano--Weierstrass Theorem and~\eqref{ec:limt}
yield 
the existence of a subsequence $(\ve_{n_k})_{k\in \mathbb{N}}$ of 
$(\ve_n)_{n\in \mathbb{N}}$ such that 
$\ve_{n_k}\to 0$ as $k\to \infty$ and 
\begin{equation}
\lim\limits_{k\to \infty }v_{\ve_{n_k}}(t^{\mathsf{cut}}_{\ve_{n_k}}+r\cdot w_{\ve_{n_k}};x)=
\lambda^{1-\ell}e^{-\lambda r w}\check{v}(x)
\end{equation}
for some $\check{v}(x)\in \omega(x)$.
Therefore, Item~(iii)~in~Lemma~\ref{lem:ptv}  implies
\begin{equation}\label{e:inferior}
\begin{split}
\liminf_{\ve\to 0^+}
\ud_{\mathrm{TV}}(t^{\mathsf{cut}}_{\ve}+r\cdot w_{\ve};\ve,x)&=
\lim\limits_{n\to \infty}\tv\left(v_{\ve_n}(t^{\mathsf{cut}}_{\ve_n}+r\cdot w_{\ve_n};x)+Z,Z\right)\\
&=\lim\limits_{k\to \infty}\tv\left(v_{\ve_{n_k}}(t^{\mathsf{cut}}_{\ve_{n_k}}+r\cdot w_{\ve_{n_k}};x)+Z,Z\right)\\
&=
\tv\left(\lambda^{1-\ell}e^{-\lambda r w}\check{v}(x)+Z,Z\right).
\end{split}
\end{equation}
Similarly,
\begin{equation}
\begin{split}
\limsup_{\ve\to 0^+}
\ud_{\mathrm{TV}}(t^{\mathsf{cut}}_{\ve}+r\cdot w_{\ve};\ve,x)=\tv\left(\lambda^{1-\ell}e^{-\lambda r w}\hat{v}(x)+Z,Z\right)
\end{split}
\end{equation}
for some $\hat{v}(x)\in \omega(x)$. Therefore, we conclude~\eqref{e:infsup}.
Recall that 
$0_m\not \in\omega(x)$, see~\eqref{eq:nozero}.
Therefore, Item~(iii) and Item~(iv)~in~Lemma~\ref{lem:ptv} give~\eqref{eq:perfiltv1}.
\end{proof}

\begin{proof}[Proof of Theorem~\ref{th:ptvgeneral} (Profile cut-off convergence in the total variation distance)]\hfill

\noindent
We first assume that~\eqref{e:tv} holds true.
Repeating step by step the proof of~\eqref{e:inferior}  one can deduce that the set of accumulation points of 
\[
\left(\tv\left(\frac{X^{\ve}_{t^{\mathsf{cut}}_{\ve}+r\cdot w_{\ve}}(x)}{\ve\sigma_{t^{\mathsf{cut}}_{\ve}+r\cdot w_{\ve}}}, Z\right)\right)_{\ve>0}
\] 
is given by
$
\{\tv(\lambda^{1-\ell}e^{-\lambda r w}\widetilde{v}(x)+Z,Z):\widetilde{v}(x)\in \omega(x)\}$, which, with the help of~\eqref{e:inferior}, yields~\eqref{e:tvp}.

\noindent We then assume that~\eqref{e:tvp} holds true. Then~\eqref{e:infsup} implies~\eqref{e:tv}. 
\end{proof}

\section{Basic general properties of the total variation distance}\label{ap:tv}

In this section, we recall the definition of the total variation distance.
Let $(\Omega,\mathcal{F})$ be a measurable space and let
$
\mathfrak{P}:=\{\mathbb{P}:\mathcal{F}\to [0,1]: \mathbb{P} \textrm{ is a probability measure}\}.
$
The total variation distance $\tv:\mathfrak{P}\times \mathfrak{P}\to [0,1]$ is defined by
$
\tv(\mathbb{P}_1,\mathbb{P}_2):=\sup_{F\in \mathcal{F}}|\mathbb{P}_1(F)-\mathbb{P}_2(F)| \textrm{ for any }  \mathbb{P}_1,\mathbb{P}_2\in \mathfrak{P}.
$
Let $(\Omega,\mathcal{F},\mathbb{P})$ be a probability space, $m\in \mathbb{N}$, and
let $X_1:\Omega\to \mathbb{R}^m$ and $X_2:\Omega\to \mathbb{R}^m$ be two random vectors having distribution $\mathbb{P}_{X_1}$ and $\mathbb{P}_{X_2}$, respectively.
Then the total variation distance between the law of $X_1$ and the law of $X_2$ is given by  
$
\tv(\mathbb{P}_{X_1},\mathbb{P}_{X_2})=\sup_{B\in \mathcal{B}(\mathbb{R}^m)}|\mathbb{P}(X_1\in B)-\mathbb{P}(X_2\in B)|,
$
where $\mathcal{B}(\mathbb{R}^m)$ denotes the Borelian $\sigma$-algebra of $\mathbb{R}^m$.
In conscious abuse of notation and for short-hand notation, we write $\tv(X_1,X_2)$ instead of $\tv(\mathbb{P}_{X_1},\mathbb{P}_{X_2})$.
The following lemma states the basic properties of the total variation distance.
In the absolute continuous case, the proofs of Item~(i) and Item~(ii) can be found in 
Lemma~A.1~in~Appendix~A of~\cite{BPJC} or Theorem~5.2 in~\cite{Devroye--Lugosi} for the general case.
Item~(iii) follows from the continuity of the shift operator in $L^1(\mathbb{R}^m)$ and Item~(iv) follows from a standard approximation argument.

\begin{lemma}[Basic properties of the total variation distance]\label{lem:ptv}
\hfill

\noindent
Let $(\Omega,\mathcal{F},\mathbb{P})$ be a probability space, and let $X_1:\Omega\to \mathbb{R}^m$ and $X_2:\Omega\to \mathbb{R}^m$ be two random vectors. 
Denote the Euclidean norm in $\mathbb{R}^m$ by
 $\|\cdot\|$. 
Then the following holds true.
\begin{itemize}
\item[(i)] Translation invariant: for any deterministic vectors $v_1,v_2\in \mathbb{R}^m$ it follows that 
\[
\tv(v_1+X_1,v_2+X_2)=\tv((v_1-v_2)+X_1,X_2)=\tv(X_1,(v_2-v_1)+X_2).
\]
\item[(ii)] 
Zero-homogeneity:
for any non-zero constant $c$ it follows that
\[
\tv(cX_1,cX_2)=\tv(X_1,X_2).
\]
\item[(iii)] 
Continuity at the shift: for any function of deterministic vectors $(v_{\ve})_{\ve}$ such that $\|v_{\ve}-v\|\to 0$ as $\ve\to 0^+$
for some $v\in \mathbb{R}^m$,
it follows that
\[
\lim\limits_{\ve \to 0^+}\tv(v_{\ve}+X_1,X_1)=\tv(v+X_1,X_1).
\]
\item[(iv)]
Displacement at infinity: for any function of deterministic vectors $(v_{\ve})_{\ve}$ such that $\|v_{\ve}\|\to \infty$ as $\ve\to 0^+$
it follows that
\[
\lim\limits_{\ve \to 0^+}\tv(v_{\ve}+X_1,X_1)=1.
\]
\end{itemize}
\end{lemma}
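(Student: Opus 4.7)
The plan is to verify each item directly from the definition $\tv(\mathbb{P}_{X_1},\mathbb{P}_{X_2})=\sup_{B\in\mathcal{B}(\mathbb{R}^m)}|\mathbb{P}(X_1\in B)-\mathbb{P}(X_2\in B)|$, exploiting the group structure of $\mathbb{R}^m$ under translation and dilation together with the classical $L^1$-continuity of translation and a tightness argument. Items (i) and (ii) reduce to pushforward change-of-variable identities, (iii) is the analytic core and relies on absolute continuity of the law, and (iv) is a standard escape-to-infinity argument.

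For Item~(i), I would write $\mathbb{P}(v_i+X_i\in B)=\mathbb{P}(X_i\in B-v_i)$ for $i=1,2$, after which the two claimed identities correspond to the two relabelings $C:=B-v_1$ and $C:=B-v_2$ of the Borel set $B$ appearing in the supremum, yielding
\[
\tv(v_1+X_1,v_2+X_2)=\tv(X_1,(v_2-v_1)+X_2)=\tv((v_1-v_2)+X_1,X_2),
\]
respectively. For Item~(ii), for $c\neq 0$ the map $y\mapsto cy$ is a Borel bijection of $\mathbb{R}^m$, hence the supremum defining $\tv(cX_1,cX_2)$ is taken over the family $\{c^{-1}B:B\in\mathcal{B}(\mathbb{R}^m)\}=\mathcal{B}(\mathbb{R}^m)$, which recovers $\tv(X_1,X_2)$.

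For Item~(iii), I would assume---as is tacit in all applications of the lemma in this paper and indispensable in view of the Dirac-mass counterexample---that $X_1$ admits a Lebesgue density $f\in L^1(\mathbb{R}^m)$. Then $v+X_1$ has density $\tau_v f(\cdot):=f(\cdot-v)$, and Scheff\'e's identity yields $\tv(v_\ve+X_1,X_1)=\tfrac{1}{2}\|\tau_{v_\ve}f-f\|_{L^1}$. The reverse triangle inequality combined with translation invariance of the $L^1$-norm gives
\[
\bigl|\,\|\tau_{v_\ve}f-f\|_{L^1}-\|\tau_v f-f\|_{L^1}\,\bigr|\le \|\tau_{v_\ve}f-\tau_v f\|_{L^1}=\|\tau_{v_\ve-v}f-f\|_{L^1},
\]
and the classical continuity of the shift operator on $L^1(\mathbb{R}^m)$ forces the right-hand side to vanish as $\ve\to 0^+$, which establishes (iii).

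For Item~(iv), no regularity assumption on $X_1$ is needed. Fixing $R>0$ and letting $B_R$ denote the closed Euclidean ball of radius $R$ in $\mathbb{R}^m$, testing the supremum at $B_R$ gives
\[
\tv(v_\ve+X_1,X_1)\ge \mathbb{P}(X_1\in B_R)-\mathbb{P}(X_1\in B_R-v_\ve).
\]
Since $\|v_\ve\|\to\infty$, the translated ball $B_R-v_\ve$ eventually escapes every prescribed compact, so tightness of the law of $X_1$ gives $\mathbb{P}(X_1\in B_R-v_\ve)\to 0$; taking $\liminf_{\ve\to 0^+}$ and then letting $R\to\infty$ yields $\liminf\ge 1$, and the universal bound $\tv\le 1$ upgrades this to the stated limit. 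The main obstacle is conceptual rather than computational: one must recognize that (iii) intrinsically requires absolute continuity of $X_1$, since no manipulation at the level of (i) and (ii) can synthesize $L^1$-smoothness from a singular law.
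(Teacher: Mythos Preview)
Your proof is correct and mirrors the paper's own (sketched) argument: Items~(i)--(ii) by pushforward change of variables, Item~(iii) by Scheff\'e together with the $L^1$-continuity of translation (precisely the paper's hint), and Item~(iv) by a tightness-based escape argument. Your explicit remark that (iii) requires $X_1$ to have a Lebesgue density is well taken---the paper leaves this hypothesis tacit, but it is implicit both in the $L^1$-shift hint and in every application of the lemma, where the limiting vector $Z$ is always absolutely continuous.
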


\section{Basic general properties of the Wasserstein distance }\label{ap:wp}

In this section, we recall the definition of  Wasserstein distances.
We continue with the notation introduced in Appendix~\ref{ap:tv}.
Let $\mathcal{B}(\mathbb{R}^m)\otimes \mathcal{B}(\mathbb{R}^m)$ be the product $\sigma$-algebra.
We say that a probability measure $\nu:\mathcal{B}(\mathbb{R}^m)\otimes \mathcal{B}(\mathbb{R}^m)\to [0,1]$ is a coupling between the probabilities $\mathbb{P}_{X_1}$ and $\mathbb{P}_{X_2}$ if and only if for all $B\in \mathcal{B}(\mathbb{R}^m)$  we have
$
\nu(B\times \mathbb{R}^m)=\mathbb{P}_{X_1}(B) \textrm{ and } 
\nu(\mathbb{R}^m\times B)=\mathbb{P}_{X_2}(B).
$
We then denote the set of all coupling between $\mathbb{P}_{X_1}$ and $\mathbb{P}_{X_2}$ by $\mathcal{C}(\mathbb{P}_{X_1},\mathbb{P}_{X_2})$.
In addition, assume that
$X_1$ and $X_2$ have
finite absolute $p$-th moments for some $p\geq 1$. More precisely, 
$
\int_{\mathbb{R}^m} \|u\|^p\, \mathbb{P}_{X_1}(\ud u)<\infty$ and
$\int_{\mathbb{R}^m} \|u\|^p\, \mathbb{P}_{X_2}(\ud u)<\infty$,
where $\|\cdot\|$ is the Euclidean norm in $\mathbb{R}^m$.
We define the 
Wasserstein distance of order $p$ between $\mathbb{P}_{X_1}$ and $\mathbb{P}_{X_2}$ by
\begin{equation}
\Wp(\mathbb{P}_{X_1},\mathbb{P}_{X_2}):=\inf_{\nu \in \mathcal{C}(\mathbb{P}_{X_1},\mathbb{P}_{X_2})} 
\left(\int_{\mathbb{R}^m\times \mathbb{R}^m} \|u_1-u_2\|^p \nu(\ud u_1, \ud u_2)\right)^{1/p}.
\end{equation}
In a conscious abuse of notation and for short-hand notation, we write $\Wp(X_1,X_2)$ instead of $\Wp(\mathbb{P}_{X_1},\mathbb{P}_{X_2})$.
The following lemma states the basic properties of $\Wp$, except for non-standard shift-additivity property given in Item~(iii).
The proofs of Item~(i), Item~(ii) and Item~(iii) can be found for instance in Lemma~2.2 in~\cite{BHPWA}, while the proof of Item~(iv) follows using the synchronous coupling and the sub-multiplicative property of the norm.
For further details and
properties of the Wasserstein distances, we refer to~\cite{Panaretos,Villani}.

\begin{lemma}[Basic properties of the Wasserstein distance of order $p\geq 1$]
\label{lem:basicwp}
\hfill

\noindent
Let $(\Omega,\mathcal{F},\mathbb{P})$ be a probability space, and let $X_1:\Omega\to \mathbb{R}^m$ and $X_2:\Omega\to \mathbb{R}^m$ be two random vectors
having finite absolute $p$-th moments for some $p\geq 1$.
Then the following holds true.
\begin{itemize}
\item[(i)] 
Translation invariant:
for any deterministic vectors $v_1,v_2\in \mathbb{R}^m$ it follows that 
\[
\Wp(v_1+X_1,v_2+X_2)=\Wp((v_1-v_2)+X_1,X_2)=\Wp(X_1,(v_2-v_1)+X_2).
\]
\item[(ii)] 
One-homogeneity:
for any non-zero constant $c$ it follows that
\[
\Wp(cX_1,cX_2)=|c|\,\Wp(X_1,X_2).
\]
\item[(iii)] 
Shift-additivity:
for any deterministic vector $v_1\in \mathbb{R}^d$ it follows that
\[
\Wp(v_1+X_1,X_1)=\|v_1\|.
\]
\item[(iv)] 
Shift and scaling continuity:
for any deterministic vectors $v_1,v_2\in \mathbb{R}^d$ and matrices $M_1,M_2\in \mathbb{R}^{m\times m}$ it follows that
\[
\Wp(v_1+M_1X_1,v_2+M_2X_1)\leq \|v_1-v_2\|+\|M_1-M_2\|_2(\mathbb{E}[\|X_1\|^p])^{1/p},
\]
where for $C=(c_{j,k})_{j,k\in \{1,\ldots,m\}}\in \mathbb{R}^{m\times m}$ we set $\|C\|_2:=\left(\sum_{j=1}^{m}\sum_{k=1}^{m}c^2_{j,k}\right)^{1/2}$.
\end{itemize}
\end{lemma}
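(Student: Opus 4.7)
All four items follow from direct manipulations of the defining infimum
$\Wp(X_1,X_2)^p=\inf_{\nu\in \mathcal{C}(\mathbb{P}_{X_1},\mathbb{P}_{X_2})}\int_{\mathbb{R}^m\times\mathbb{R}^m}\|u_1-u_2\|^p\,\nu(\ud u_1,\ud u_2)$. The only step that is not a pure push-forward/coupling bookkeeping is the lower bound in item (iii), which I will argue separately via a Jensen-type estimate.

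For item (i), I would observe that the affine map $T(u_1,u_2):=(v_1+u_1,v_2+u_2)$ is a bijection on $\mathbb{R}^m\times\mathbb{R}^m$ whose push-forward sets up a bijection between $\mathcal{C}(\mathbb{P}_{X_1},\mathbb{P}_{X_2})$ and $\mathcal{C}(\mathbb{P}_{v_1+X_1},\mathbb{P}_{v_2+X_2})$. Since the transport cost transforms as
$\|(v_1+u_1)-(v_2+u_2)\|^p=\|(v_1-v_2)+(u_1-u_2)\|^p$, the infimum is unchanged under the identification and equals $\Wp((v_1-v_2)+X_1,X_2)^p$; the symmetric identification gives the other equality. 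Item (ii) is analogous: for $c\neq 0$ the map $(u_1,u_2)\mapsto(cu_1,cu_2)$ bijects couplings and scales the cost by $|c|^p$, producing $\Wp(cX_1,cX_2)=|c|\,\Wp(X_1,X_2)$.

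For item (iii), the upper bound $\Wp(v_1+X_1,X_1)\leq \|v_1\|$ comes from the synchronous coupling in which both marginals are built from the same copy of $X_1$; its cost equals $\mathbb{E}[\|v_1\|^p]=\|v_1\|^p$. The matching lower bound is the place that really uses probability: for any coupling realized by a pair $(Y_1,Y_2)$ with $Y_1\stackrel{\mathrm{Law}}{=}v_1+X_1$ and $Y_2\stackrel{\mathrm{Law}}{=}X_1$, linearity of expectation forces $\mathbb{E}[Y_1-Y_2]=v_1$, and Jensen's inequality applied to the convex map $u\mapsto\|u\|^p$ yields
\[
\mathbb{E}[\|Y_1-Y_2\|^p]\geq \|\mathbb{E}[Y_1-Y_2]\|^p=\|v_1\|^p.
\]
Taking the infimum over couplings and then the $p$-th root closes the equality.

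For item (iv), I would again employ the synchronous coupling $(v_1+M_1X_1,v_2+M_2X_1)$, which lies in $\mathcal{C}(\mathbb{P}_{v_1+M_1X_1},\mathbb{P}_{v_2+M_2X_1})$. Pointwise, the triangle inequality and the sub-multiplicativity $\|Mu\|\leq \|M\|_2\|u\|$ give
\[
\|(v_1-v_2)+(M_1-M_2)X_1\|\leq \|v_1-v_2\|+\|M_1-M_2\|_2\,\|X_1\|,
\]
and Minkowski's inequality in $L^p(\Omega)$ promotes this to the $L^p$-estimate $(\mathbb{E}[\|(v_1-v_2)+(M_1-M_2)X_1\|^p])^{1/p}\leq \|v_1-v_2\|+\|M_1-M_2\|_2(\mathbb{E}[\|X_1\|^p])^{1/p}$. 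Since $\Wp$ is an infimum over couplings, this particular coupling delivers the stated upper bound. The main obstacle, insofar as there is one, is the Jensen step in item (iii), which is what turns shift-additivity into a sharp identity rather than a one-sided inequality; everything else is essentially a change of variables in the coupling.
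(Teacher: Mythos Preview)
Your proposal is correct and aligns with the paper's own treatment: the paper does not give a full proof but simply cites Lemma~2.2 in~\cite{BHPWA} for items~(i)--(iii) and remarks that item~(iv) follows from the synchronous coupling together with the sub-multiplicativity of the norm, which is exactly what you do. Your Jensen argument for the lower bound in item~(iii) is the standard way to obtain the sharp identity and is consistent with the cited reference.
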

\end{appendix}

\section*{Declarations}
\noindent
\textbf{Acknowledgments}:
Gerardo Barrera would like to express his gratitude to Aalto University School of Science (Espoo, Finland) and Instituto Superior T\'ecnico (Lisbon, Portugal) for all the facilities used along the realization of this work.

\noindent
\textbf{Ethical approval}: Not applicable.

\noindent
\textbf{Competing interests}: The authors declare that they have no conflict of interest.

\noindent
\textbf{Authors' contributions}:
All authors have contributed equally to the paper.

\noindent
\textbf{Availability of data and materials}: Data sharing not applicable to this article as no data-sets were generated or analyzed during the current study.

\noindent
\textbf{Funding}:
Pauliina Ilmonen and Gerardo Barrera gratefully acknowledge support from the Academy of Finland via Finnish Centre of Excellence in Randomness
and Structures 
(decision number~346308). 
The research of Michael A. H\"ogele has been supported by the project "Mean deviation frequencies and the cutoff phenomenon" (INV-2023-162-2850) of the School of Sciences (Facultad de Ciencias) at Universidad de los Andes, Bogot\'a, Colombia.
The research of Gerardo Barrera, Michael A. H\"ogele and Lauri Viitasaari is partially supported by European Union’s Horizon Europe research and innovation programme under the Marie Sk\l{}odowska-Curie Actions Staff Exchanges (Grant agreement No.~101183168 -- LiBERA, Call: HORIZON-MSCA-2023-SE-01).

\noindent 
\textbf{Disclaimer}: Funded by the European Union. Views and opinions expressed are however those of the author(s) only and do not necessarily reflect those of the European Union or the European Education and Culture Executive Agency (EACEA). Neither the European Union nor EACEA can be held responsible for them.

\markboth{}{References}

\end{document}